\documentclass{amsart}
\usepackage{amsmath, amssymb, amsthm}
\usepackage{mathtools}
\usepackage{amscd}
\usepackage{graphicx} % [dvips]
\usepackage{enumerate}
\usepackage{todonotes}

\def\Qbar{\overline{\mathbb Q}}

\def\0{{\mathbf 0}}
\def\1{{\mathbf 1}}

\def\Kbar{{\bar K}}

\def\Aut{\mathrm{Aut}}

\def\Gal{\mathrm{Gal}}

\def\PrePer{\mathrm{PrePer}}

%\def\norm{\mathrm{norm}}

%\addtolength{\hoffset}{-.5in} \addtolength{\textwidth}{1in}
%\addtolength{\textheight}{-1in}

\usepackage[all]{xy}
%\CompileMatrices
%\usepackage{coordsys}

\newtheorem{thm}{Theorem}

\newtheorem{prop}[thm]{Proposition}
\newtheorem{lemma}[thm]{Lemma}

\newtheorem{cor}[thm]{Corollary}
\newtheorem{question}[thm]{Question}

\newtheorem*{thm*}{Theorem}
\newtheorem*{alg*}{Algorithm}
\newtheorem*{lemma*}{Lemma}

\theoremstyle{remark}
\newtheorem{rmk}{Remark}
\newtheorem*{rmk*}{Remark}

\newtheorem*{notation*}{Notation}

\newtheorem*{example*}{Example}

\theoremstyle{definition}
\newtheorem{defn}[thm]{Definition}
\newtheorem*{defn*}{Definition}

%\numberwithin{thm}{section}

\newcommand{\mybf}{\mathbb}

\newcommand{\bE}{\mybf{E}}

\newcommand{\bP}{\mybf{P}}
\newcommand{\bR}{\mybf{R}}
\newcommand{\bM}{\mybf{M}}

\newcommand{\bC}{\mybf{C}}

\newcommand{\bQ}{\mybf{Q}}

\newcommand{\bA}{\mybf{A}}

\newcommand{\cM}{\mathcal{M}}

\newcommand{\cX}{\mathcal{X}}
\newcommand{\al}{\alpha}

\providecommand{\abs}[1]{\lvert#1\rvert}
\providecommand{\norm}[1]{\lVert#1\rVert}

\newcommand{\ON}[1]{\operatorname{#1}}

\newcommand{\ra}{\rightarrow}

\newcommand{\ep}{\epsilon}

\newcommand{\p}{\partial}

\newcommand{\lfrac}[2]{\left(\frac{#1}{#2}\right)}

\newcommand{\Diag}{\mathrm{Diag}}

\newcommand{\logabs}[1]{\log\,\abs{#1}}

%%%%%%%% DIRECT AND INVERSE LIMITS %%%%%%%%%%
\newcommand{\dlim}{\mathop{\varinjlim}\limits}

\def\talltareesidedbox#1{\setbox0=\hbox{$#1$}\dimen0=\wd0 \advance\dimen0 by3pt\rlap{\hbox{\vrule height10pt width.4pt
 depth2pt \kern-.4pt\vrule height10.4pt width\dimen0 depth-10pt\kern-.4pt \vrule height10pt width.4pt depth2pt}}
 \relax \hbox to\dimen0{\hss$#1$\hss}}%s\ignorespaces}
\def\tareesidedbox#1{\setbox0=\hbox{$#1$}\dimen0=\wd0 \advance\dimen0 by3pt\rlap{\hbox{\vrule height8pt width.4pt
 depth2pt \kern-.4pt\vrule height8.4pt width\dimen0 depth-8pt\kern-.4pt \vrule height8pt width.4pt depth2pt}}
\relax \hbox to\dimen0{\hss$#1$\hss}}%s\ignorespaces}
\def\shorttareesidedbox#1{\setbox0=\hbox{$#1$}\dimen0=\wd0 \advance\dimen0 by3pt\rlap{\hbox{\vrule height7pt width.4pt
 depth2pt \kern-.4pt\vrule height7.4pt width\dimen0 depth-7pt\kern-.4pt \vrule height7pt width.4pt depth2pt}}
 \relax \hbox to\dimen0{\hss$#1$\hss}}%s\ignorespaces}

\newcommand{\dist}{\operatorname{dist}}

% For Berkovich space:
\newcommand{\sP}{\mathsf{P}}
\newcommand{\sA}{\mathsf{A}}
% Old notation: \bA^1_\Berk(\bC_ --> \sA^1(\bC_p)
\newcommand{\lp}{(\!(\,}
\newcommand{\rp}{\, )\!)}

\title[Mutual energy and unlikely intersections]{A metric of mutual energy and unlikely intersections for dynamical systems}
 
\author[Fili]{Paul Fili}
\address{Department of Mathematics\\ Oklahoma State University, Stillwater, OK 74078}
\email{paul.fili@okstate.edu}
\thanks{The author would like to thank Igor Pritsker for many helpful conversations and for bringing \cite{Lowner} to his attention, and to thank Robert Rumely for suggestions which improved Proposition \ref{prop:main-ub}.}
\subjclass[2010]{37P30, 31A15, 37P50, 11G50, 37P05}
\keywords{Potential theory, dynamical systems, Arakelov-Zhang pairing, Mandelbrot set, unlikely intersections.}
\date{\today}

\usepackage[pdftitle={A metric of mutual energy and unlikely intersections},pdfauthor={Fili},pdfstartview={}]{hyperref}

\begin{document}

\begin{abstract}
 We introduce a metric of mutual energy for adelic measures associated to the Arakelov-Zhang pairing. Using this metric and potential theoretic techniques involving discrete approximations to energy integrals, we prove an effective bound on a problem of Baker and DeMarco on unlikely intersections of dynamical systems, specifically, for the set of complex parameters $c$ for which $z=0$ and $1$ are both preperiodic under iteration of $f_c(z)=z^2 + c$.
\end{abstract}

\maketitle
%\tableofcontents

\allowdisplaybreaks[2]

\section{Introduction}

Petsche, Szpiro, and Tucker \cite{PST-pairing}, using local analytic machinery on Berkovich space, proved that the arithmetic intersection product introduced by Arakelov \cite{Arakelov} and studied by many others, particularly Zhang \cite{ZhangSmallPoints} in the dynamical context, which they termed the \emph{Arakelov-Zhang height pairing} in this context, satisfied the following theorem:
\begin{thm*}[Petsche, Szpiro, Tucker 2011]
If $\phi,\psi$ are rational maps of degree at least $2$ defined over a number field $K$, then the conditions:
\begin{enumerate}
  \item The Arakelov-Zhang height pairing $\langle \phi,\psi\rangle$ vanishes, 
  \item The associated dynamical Weil heights $h_\phi$ and $h_\psi$ are equal,
  \item The sets of preperiodic points $\PrePer(\phi)$ and $\PrePer(\psi)$ are equal.
  \item The intersection $\PrePer(\phi)\cap \PrePer(\psi)$ is infinite, and
  \item $\displaystyle \liminf_{\al\in \bP^1(\overline K)} (h_\phi(\al)+h_\psi(\al))=0$,
\end{enumerate}
are all equivalent, where $\PrePer(\phi)$ denotes the set of preperiodic points of $\phi$ and $h_\phi$ denotes the Call-Silverman canonical height \cite{CallSilverman}, and likewise for $\psi$.
\end{thm*}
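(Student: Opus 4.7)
The plan is to close the cycle $(1) \Rightarrow (2) \Rightarrow (3) \Rightarrow (4) \Rightarrow (5) \Rightarrow (1)$. Three of these implications are essentially formal: the Call--Silverman characterization $\PrePer(\phi) = \{\alpha : h_\phi(\alpha) = 0\}$ yields $(2) \Rightarrow (3)$; since any rational map of degree $\geq 2$ admits infinitely many preperiodic points, $(3) \Rightarrow (4)$ is immediate; and $(4) \Rightarrow (5)$ holds because every $\alpha \in \PrePer(\phi) \cap \PrePer(\psi)$ satisfies $h_\phi(\alpha) = h_\psi(\alpha) = 0$, so an infinite intersection forces the liminf to vanish.

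The two substantive implications are $(1) \Rightarrow (2)$ and $(5) \Rightarrow (1)$, which I would handle place by place on the Berkovich projective line. For each place $v$ of $K$, let $\mu_{\phi,v}$ denote the canonical equilibrium measure attached to $\phi$. The key analytic step is to express the Arakelov--Zhang pairing as a sum of local mutual-energy terms
\[
\langle \phi, \psi\rangle \;=\; \frac{1}{2}\sum_v I_v\bigl(\mu_{\phi,v}-\mu_{\psi,v}\bigr),
\]
where $I_v$ is the Berkovich energy functional. Positivity of $I_v$ on balanced signed measures (Favre--Rivera-Letelier, Baker--Rumely) then forces $\mu_{\phi,v} = \mu_{\psi,v}$ for every $v$ whenever the pairing vanishes; equality of the local canonical measures at every place implies equality of the local heights, and hence of $h_\phi$ and $h_\psi$, giving $(1) \Rightarrow (2)$.

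For $(5) \Rightarrow (1)$, I would apply the arithmetic equidistribution theorem of Yuan, Baker--Rumely, Chambert-Loir, and Favre--Rivera-Letelier to a sequence $\{\alpha_n\} \subset \bP^1(\overline{K})$ with $h_\phi(\alpha_n) + h_\psi(\alpha_n) \to 0$. Since each height is nonnegative, both $h_\phi(\alpha_n) \to 0$ and $h_\psi(\alpha_n) \to 0$, so at every place $v$ the Galois-orbit measures $[\alpha_n]_v$ converge weakly to both $\mu_{\phi,v}$ and $\mu_{\psi,v}$. Plugging this simultaneous equidistribution into the integral representation of $h_\phi(\alpha_n) + h_\psi(\alpha_n)$ against local Green's functions should identify the limit with $\frac{1}{2}\sum_v I_v(\mu_{\phi,v}-\mu_{\psi,v}) = \langle \phi, \psi\rangle$, which is therefore zero.

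The main obstacle is justifying this passage to the limit. The local Green's functions $g_{\phi,v}$ are not continuous on Berkovich space, since they carry logarithmic singularities along the support of the Galois orbit of $\alpha_n$, so weak convergence of measures does not directly justify integrating against them. The remedy I would pursue is to approximate $g_{\phi,v}$ by continuous potentials and control the error by a quantity dominated by the mutual energy $I_v(\mu_{\phi,v} - [\alpha_n]_v)$, which itself is controlled by $h_\phi(\alpha_n)$ and therefore tends to zero. Making this estimate effective, uniformly in $v$ and in $n$, is the real technical work; it is precisely where the potential-theoretic machinery on Berkovich curves is indispensable.
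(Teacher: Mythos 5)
Your cycle of implications is essentially the route the paper takes for its generalization to adelic measures (Theorem \ref{thm:2}): the easy steps $(2)\Rightarrow(3)\Rightarrow(4)\Rightarrow(5)$ are formal consequences of the Call--Silverman characterization of preperiodic points and Northcott, and $(1)\Rightarrow(2)$ follows from positive-definiteness of the local energy functionals on balanced measures with continuous potential (Favre--Rivera-Letelier), exactly as you say. So the architecture is correct and matches the paper's.

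The one place where you diverge --- and where you leave genuine work undone --- is $(5)\Rightarrow(1)$. You propose to pass to the limit in the integral representation of $h_\phi(\al_n)+h_\psi(\al_n)$ against local Green's functions, and you correctly flag that the logarithmic singularities along the Galois orbits obstruct a naive use of weak convergence; you then defer the required uniform regularization estimates. But this entire computation is unnecessary for the implication you want. Once both $h_\phi(\al_n)\to 0$ and $h_\psi(\al_n)\to 0$, equidistribution gives, at each place $v$, weak convergence $[\al_n]\to\mu_{\phi,v}$ \emph{and} $[\al_n]\to\mu_{\psi,v}$ on $\sP^1(\bC_v)$; uniqueness of weak limits (the two measures induce the same linear functional on $C(\sP^1(\bC_v))$) forces $\mu_{\phi,v}=\mu_{\psi,v}$ for every $v$, and then $\langle\phi,\psi\rangle=\frac{1}{2}\sum_v I_v(\mu_{\phi,v}-\mu_{\psi,v})=0$ with no limit interchange at all. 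This is how the paper closes the loop in Theorem \ref{thm:2}. The delicate limit you describe is genuinely needed only for the stronger quantitative statement $h_\psi(\al_n)\to\langle\phi,\psi\rangle$ (the paper's Theorem \ref{thm:limit-thm}), where it is handled by observing that the potential of the \emph{difference} $\sigma_v-\rho_v$ is continuous, so one integrates that against $[\al_n]$ rather than the singular Green's function of either measure alone. If you only want the equivalence of $(1)$--$(5)$, replace your last step with the uniqueness-of-weak-limits argument and the proof is complete.
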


The first goal of this note is to prove that in fact the Arakelov-Zhang pairing is the square of a metric on a certain space of adelic measures $\cM$. This is done by studying the Arakelov-Zhang pairing as a sort of mutual energy pairing in the spirit of Favre and Rivera-Letelier \cite{FRL,FRLcorrigendum}. As it requires some technical background, we will defer the precise definition of adelic measures and the Arakelov-Zhang pairing to Section \ref{sec:background} below. Our first result is the following:

\begin{thm}\label{thm:triangle-ineq}
 For any adelic measures $\rho,\sigma$, let $\langle \rho,\sigma\rangle$ denote the Arakelov-Zhang pairing, and
\[
 d(\rho,\sigma) = \langle \rho,\sigma\rangle^{1/2}.
\]
 Then $d$ is a metric on the real vector space of all adelic measures, that is, for all adelic measures $\rho,\sigma,\tau$, we have
\[
 d(\rho,\tau)\leq d(\rho,\sigma)+d(\sigma,\tau),
\]
 and $
 d(\rho,\sigma)=0$ if and only if $\rho=\sigma$ as adelic measures.
\end{thm}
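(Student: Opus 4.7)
The plan is to interpret $\langle \rho, \sigma \rangle$ as a positive semi-definite quadratic form in the difference $\rho - \sigma$ and then deduce the triangle inequality from Cauchy--Schwarz in the associated semi-inner-product. Concretely, I would first decompose the pairing place by place in the form
\[
 \langle \rho, \sigma \rangle \;=\; \sum_v I_v(\rho_v - \sigma_v),
\]
where $I_v(\nu) = \iint g_v(x,y)\,d\nu(x)\,d\nu(y)$ is the local energy integral against the appropriate potential kernel $g_v$ on the Berkovich projective line. Equivalently, one exhibits a symmetric bilinear form $B(\nu_1, \nu_2) = \sum_v \iint g_v(x,y)\,d\nu_1(x)\,d\nu_2(y)$ on the space of signed adelic measures of mass zero at each place, with the property that $\langle \rho, \sigma \rangle = B(\rho - \sigma, \rho - \sigma)$. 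That such a clean quadratic representation exists is essentially a reformulation of the Petsche--Szpiro--Tucker local calculations, combined with the fact that the self-pairing of an adelic measure in $\mathcal{M}$ vanishes against its own potential.

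The main work, and I expect the principal obstacle, is verifying that each local energy form $I_v$ is positive semi-definite on the subspace of signed measures of mass zero and finite energy. At archimedean places this is classical: $I_v$ is the logarithmic energy on $\PP^1(\CC)$, which is conditionally positive definite by a Fourier-type identity. At non-archimedean places the analogous statement holds in the Berkovich potential theory of Baker--Rumely and Favre--Rivera-Letelier. A uniform route, and the one suggested by the abstract, is to use \emph{discrete approximations} to the energy integrals: approximate $\nu = \rho_v - \sigma_v$ by atomic measures $\nu^{(n)}$ of mass zero supported on finite sets, expand $I_v(\nu^{(n)})$ as a pairwise sum $\sum_{i,j} g_v(x_i, x_j)\, w_i w_j$, and pass to the limit using the finite-energy hypothesis and the continuity properties of $g_v$ on Berkovich space. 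Positivity of each truncated bilinear form reduces to classical conditional positive-definiteness of the kernel $g_v$ on a finite point configuration, which is where the bulk of the potential-theoretic input is needed.

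Once positive semi-definiteness of $I := \sum_v I_v$ is in hand, the triangle inequality is exactly the Cauchy--Schwarz inequality applied to the splitting $\rho - \tau = (\rho - \sigma) + (\sigma - \tau)$, yielding
\[
 I(\rho - \tau)^{1/2} \;\leq\; I(\rho - \sigma)^{1/2} + I(\sigma - \tau)^{1/2},
\]
which is precisely $d(\rho, \tau) \leq d(\rho, \sigma) + d(\sigma, \tau)$. Symmetry of $d$ is immediate from symmetry of $\langle\cdot,\cdot\rangle$, and non-negativity follows from the same positivity argument. Finally, for non-degeneracy, if $d(\rho,\sigma) = 0$ then every non-negative local term $I_v(\rho_v - \sigma_v)$ must vanish; strict positivity of the local energy on non-trivial finite-energy mass-zero signed measures, which will follow from the same discrete-approximation analysis, then forces $\rho_v = \sigma_v$ at every place $v$, and hence $\rho = \sigma$ as adelic measures.
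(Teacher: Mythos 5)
Your argument is essentially the paper's: the key input is that each local energy form is a positive definite symmetric bilinear form on mass-zero differences of adelic measures (the paper cites Propositions 2.6 and 4.5 of \cite{FRL} for exactly this), after which the triangle inequality is formal --- the paper applies the local triangle inequalities followed by Minkowski's inequality for the $\ell^2$-sum over places, which is an equivalent packaging of your single global Cauchy--Schwarz for the semi-inner product. One caveat: your proposed ``uniform route'' of proving local positivity by discretizing $\rho_v-\sigma_v$ into atomic mass-zero measures and invoking conditional positive definiteness of the kernel on finite point configurations does not work as stated. With the diagonal excluded, the discrete energy $\sum_{i\neq j}w_iw_j\,(-\log\abs{x_i-x_j})$ of a mass-zero atomic measure can be strictly negative (e.g.\ $\delta_0-\delta_{1/2}$ has energy $-2\log 2$); this is precisely the failure recorded in \cite[\S 6]{FRL}, and it is why the positivity statement requires continuous potentials (in particular, no atoms). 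So the positivity must come from the Fourier/smoothing argument at archimedean places and its Berkovich analogue in \cite{FRL,BakerRumelyBook}, or from regularizing the atoms \emph{before} discretizing, not from finite configurations. Granting that input, which you also offer as the primary route, the rest of your proof --- Cauchy--Schwarz for the seminorm and nondegeneracy from strict local positivity --- is correct and matches the paper.
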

For an adelic measure $\rho$ defined over $K$, we denote by $h_\rho$ the associated Weil height and
\[
 Z(\rho) = \{ \al\in\bP^1(\Kbar) : h_\rho(\al)\leq 0\}.
\]
\begin{thm}\label{thm:2}
Let $\rho,\sigma$ be adelic measures such that $Z(\rho)\cup Z(\sigma)$ is infinite. Then the following conditions are equivalent:
 \begin{enumerate}
  \item $d(\rho,\sigma)=0$,
  \item $\rho=\sigma$ as adelic measures,
  \item $h_\rho=h_\sigma$, %.
  \item $Z(\rho)=Z(\sigma)$,
  \item $Z(\rho)\cap Z(\sigma)$ is infinite, and
  \item $\displaystyle \liminf_{\al\in \bP^1(\overline K)} (h_\rho(\al)+h_\sigma(\al))=0$,
 \end{enumerate}
\end{thm}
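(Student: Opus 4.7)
The plan is to use Theorem \ref{thm:triangle-ineq} for the equivalence (1) $\iff$ (2) and then close the cycle of implications via (2) $\implies$ (3) $\implies$ (4) $\implies$ (5) $\implies$ (6) $\implies$ (2). The implications among (2)--(5) are essentially formal. Equal adelic measures produce identical local potentials and hence an identical global Weil height, giving (2) $\implies$ (3). Since $Z(\rho)$ depends only on $h_\rho$, (3) $\implies$ (4) is immediate. For (4) $\implies$ (5), observe that if $Z(\rho)=Z(\sigma)$ then $Z(\rho)\cap Z(\sigma)=Z(\rho)\cup Z(\sigma)$, which is infinite by hypothesis. For (5) $\implies$ (6), I would use the standard normalization of adelic heights so that $h_\rho,h_\sigma\ge 0$; every $\alpha\in Z(\rho)\cap Z(\sigma)$ then satisfies $h_\rho(\alpha)+h_\sigma(\alpha)=0$, so the liminf over this infinite set vanishes.

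The substantive step is (6) $\implies$ (2). I would proceed via the adelic equidistribution theorem of Favre and Rivera-Letelier \cite{FRL, FRLcorrigendum}. From (6) I first extract a sequence of distinct points $\alpha_n \in \bP^1(\Kbar)$ with $(h_\rho+h_\sigma)(\alpha_n)\to 0$; nonnegativity gives $h_\rho(\alpha_n)\to 0$ and $h_\sigma(\alpha_n)\to 0$ separately. A Northcott-type argument then lets me arrange $[K(\alpha_n):K]\to\infty$: if the degrees were bounded by some $D$, then because $h_\rho$ differs from the naive Weil height by a bounded function, the set $\{\alpha : [K(\alpha):K]\le D,\; h_\rho(\alpha)\le 1\}$ would be finite by Northcott's theorem, contradicting distinctness of the $\alpha_n$ for large $n$. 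Applying equidistribution at each place $v$, the uniform probability measures on the Galois orbits of $\alpha_n$ converge weakly to $\rho_v$; the same argument applied with $\sigma$ gives weak convergence to $\sigma_v$. Uniqueness of weak limits forces $\rho_v=\sigma_v$ at every place $v$, so $\rho=\sigma$ as adelic measures, and Theorem \ref{thm:triangle-ineq} then yields $d(\rho,\sigma)=0$.

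The principal technical obstacle is verifying the equidistribution hypothesis that Galois orbit sizes tend to infinity along the extracted sequence. Because the liminf in (6) is taken over all of $\bP^1(\Kbar)$ with no constraint on field of definition, one must rule out the pathological case that all small-height points lie in a fixed number field of bounded degree. The Northcott-type step above handles this, provided one knows that $h_\rho$ satisfies the Northcott property, which is a standard feature of heights arising from adelic measures in this framework. Once this technicality is dispatched, the remainder of the proof follows directly from Theorem \ref{thm:triangle-ineq} and the standard equidistribution machinery at each place of $K$.
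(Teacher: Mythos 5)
Your proposal follows essentially the same route as the paper's proof: the formal implications among (1)--(5), then (6) $\Rightarrow$ (2) via the Favre--Rivera-Letelier equidistribution theorem at each place and uniqueness of weak limits, closing the loop with Theorem \ref{thm:triangle-ineq}. One correction is needed, however. You invoke a ``standard normalization'' under which $h_\rho,h_\sigma\geq 0$; this is false for general adelic measures. Nonnegativity is a special feature of dynamical heights, and Remark \ref{rmk:Zfinite} is precisely about the fact that it can fail here (so $Z(\rho)$ may contain points of strictly negative height). What is true, and what the argument actually needs, is the \emph{essential} nonnegativity of adelic heights \cite[Thm.~6]{FRL}: for each $\ep>0$, only finitely many $\al$ satisfy $h_\rho(\al)<-\ep$. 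This suffices both for (5) $\Rightarrow$ (6) (the liminf is $\geq-2\ep$ for every $\ep$ by essential nonnegativity, and is $\leq 0$ because the sum $h_\rho+h_\sigma$ is nonpositive on the infinite set $Z(\rho)\cap Z(\sigma)$) and for splitting $h_\rho(\al_n)+h_\sigma(\al_n)\to 0$ into $h_\rho(\al_n)\to 0$ and $h_\sigma(\al_n)\to 0$ in the step (6) $\Rightarrow$ (2). With that substitution your argument is correct. Your Northcott-type verification that the Galois orbit sizes tend to infinity --- needed to apply the equidistribution theorem --- is a detail the paper passes over silently, and it is valid because $h_\rho$ differs from the standard Weil height by a bounded function (a consequence of the continuous-potential condition at the finitely many nonstandard places).
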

\begin{rmk}
The condition that $Z(\rho)\cup Z(\sigma)$ is infinite in the above theorem cannot be removed, as there are trivial examples of different adelic measures with $Z(\rho)\cup Z(\sigma)=\varnothing$ but $\rho\neq\sigma$, for example, we might take over $\bQ$ the measures $\rho,\sigma$ with $\rho_p = \sigma_p$ equal to the standard measure for $p<\infty$ (see Section \ref{sec:basic-potential-theory} below for the relevant definitions), $\rho_\infty$ the logarithmic equilibrium measure of $[-1,1]$, and $\sigma_\infty$ the logarithmic equilibrium measure of $[-1/2,1/2]$; both sets trivially must have $Z(\rho)=Z(\sigma)=\varnothing$ by the Fekete-Szeg\H{o} theorem as both sets have global capacity less than 1, but clearly $\rho\neq\sigma$. (It remains the case that $d(\rho,\sigma)>0$, as $\rho\neq\sigma$.) 
\end{rmk}

Part of the interest in the Arakelov-Zhang pairing lies in the theorem of Petsche, Szpiro and Tucker that if $\al_n\in\bP^1(\overline K)$ is a sequence of mutually distinct points such that
\begin{equation}\label{eqn:pst-heightzero}
 h_\phi(\al_n)\ra 0,\quad\text{then}\quad h_\psi(\al_n) \ra \langle \phi,\psi\rangle.
\end{equation}
The symmetry of the pairing $\langle \phi,\psi\rangle=\langle\psi,\phi\rangle$ reveals a remarkable symmetry in the limit above which na\"ively would have been far from obvious. As applications of their results, Petsche, Szpiro and Tucker come up with several explicit height difference bounds and formulas for the pairing in specific instances and establish some connections to special values of certain $L$-functions (see \cite[Prop. 18]{PST-pairing}).

In fact, once the metric property is recognized, the Arakelov-Zhang pairing becomes even more useful. The second goal of this paper is to give an application of this result to a problem of unlikely intersections in arithmetic dynamics that illustrates the utility of the triangle inequality for the mutual energy metric.

We recall a question posed by Umberto Zannier at the AIM workshop ``The uniform boundedness conjecture in arithmetic dynamics'' in 2008:
\begin{question}\label{question:zannier}
 Let $S_{0,1}$ denote the set of parameters $c\in \bC$ such that $z=0,1$ are both preperiodic under iteration of $f_c(z)=z^2+c$. Is $S_{0,1}$ finite?
\end{question}
\noindent Inspired by analogous problems in arithmetic geometry of recent interest, such questions are referred to as problems of \emph{unlikely intersections} in arithmetic dynamics. (We refer the interested reader to the recent book of Zannier \cite{ZannierBook} on the subject of unlikely intersection problems.) Baker and DeMarco \cite{BakerDeMarco} were able to answer this question in the affirmative using local analytic techniques involving equidistribution. Specifcally, Baker and DeMarco proved that $S_{0,1}$ is finite, and moreover that the more general set $S_{a,b}$ of parameters $c$ for which $z=a,b$ are both preperiodic under iteration of $f_c(z)=z^2+c$ is infinite if and only if $a^2=b^2$, in which case the generalized Mandelbrot sets $M_a$ and $M_b$ coincide (see Section \ref{sec:background} for further definitions).

However, these equidistribution results were ineffective and did not allow for explicit computation of the sets $S_{a,b}$. Nevertheless, based on numerical evidence, Baker and DeMarco conjectured that in fact the set $S_{0,1}=\{-2,-1,0\}$ (see \cite[Conj. 1.6]{BakerDeMarco}). Using the metric of mutual energy and some discrete approximation techniques, we are able to prove the following:
\begin{thm}\label{thm:degree-bound}
 Suppose that $c\in S_{0,1}$. Then $c$ is algebraic over the field of rational numbers of degree $[\bQ(c):\bQ]\leq 108$.
\end{thm}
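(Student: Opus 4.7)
The plan is to deploy the mutual energy metric $d$ together with its triangle inequality (Theorem \ref{thm:triangle-ineq}) in a quantitative equidistribution style argument. To each $a\in\CC$ associate the adelic measure $\mu_a$ on parameter space whose Weil height $h_{\mu_a}$ is the canonical height that vanishes precisely on parameters $c$ for which $z=a$ is preperiodic under $f_c(z)=z^2+c$; the Archimedean component of $\mu_a$ is the harmonic measure of the generalized Mandelbrot set $M_a = \{c\in\CC:\sup_n |f_c^n(a)|<\infty\}$, and the non-Archimedean components are the standard potential-theoretic measures of the corresponding filled Julia/escape loci, so that in particular $h_{\mu_0}$ and $h_{\mu_1}$ make sense simultaneously. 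By the Baker--DeMarco dichotomy ($a^2\neq b^2$), the measures $\mu_0$ and $\mu_1$ are distinct, so by Theorem \ref{thm:2} the mutual distance $d(\mu_0,\mu_1)$ is \emph{strictly positive}.

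The first substantive step is to pin down an explicit numerical \emph{lower bound} $d(\mu_0,\mu_1)\ge L>0$, i.e.\ a strictly positive lower bound for the Arakelov--Zhang pairing $\langle \mu_0,\mu_1\rangle$. This reduces, place by place, to explicitly evaluating or bounding the local mutual energy $\int\!\!\int\log|x-y|_v\,d(\mu_{0,v}-\mu_{1,v})(x)\,d(\mu_{0,v}-\mu_{1,v})(y)$. At the Archimedean place the dominant contribution comes from comparing the harmonic measures of $M_0$ and $M_1$; at finite places the measures have explicit descriptions (and typically agree) so the contribution is readily controlled. I would use the known conformal geometry of the Mandelbrot set and its shifted counterpart to obtain an explicit $L$.

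The second step is a discrete upper bound. For $c\in S_{0,1}$ of degree $D=[\QQ(c):\QQ]$, let $F=\{c_1,\dots,c_D\}$ be the Galois orbit of $c$ and let $\nu_F$ be the adelic measure placing the uniform probability mass on $F$ at each place (i.e.\ using the same discrete measure $\frac1D\sum\delta_{c_i}$ at every $v$). The heart of the argument is a proposition of the form
\[
 d(\mu_a,\nu_F)^2 \;\le\; h_{\mu_a}(c) \;+\; R(D),\qquad a=0,1,
\]
where $R(D)=O((\log D)/D)$ is an explicit diagonal correction: the naive self-energy $I(\nu_F,\nu_F)$ diverges, so one truncates by removing the diagonal and controls the remaining $\frac{1}{D^2}\sum_{i\neq j}\log|c_i-c_j|_v$ via the product formula and a Mahler/discriminant bound (this is presumably the content of Proposition \ref{prop:main-ub}, the proposition whose sharpening the author credits to Rumely in the acknowledgment). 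Since $c\in S_{0,1}$ implies $h_{\mu_0}(c)=h_{\mu_1}(c)=0$, both $d(\mu_0,\nu_F)$ and $d(\nu_F,\mu_1)$ are at most $\sqrt{R(D)}$.

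Combining the two steps via the triangle inequality,
\[
 L \;\le\; d(\mu_0,\mu_1) \;\le\; d(\mu_0,\nu_F)+d(\nu_F,\mu_1) \;\le\; 2\sqrt{R(D)},
\]
which rearranges to an effective upper bound $D\le D_0$. The numerical claim is that the constants can be tuned so that $D_0\le 108$. The principal difficulty I anticipate is making both estimates quantitatively tight simultaneously: getting the discrete correction constant in $R(D)$ small enough (this forces one to use sharp Fekete/transfinite diameter type inequalities in the truncation step) while at the same time obtaining an honest lower bound for $\langle\mu_0,\mu_1\rangle$ despite the fractal, non-smooth boundaries of $M_0$ and $M_1$. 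Overcoming the latter is where the Archimedean potential theory must be carried out explicitly, presumably by replacing the Mandelbrot-type sets with convenient sub-/super-sets of known capacity when evaluating the mutual energy.
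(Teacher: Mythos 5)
Your overall architecture---sandwiching $d(\mu_0,\mu_1)$ between a positive lower bound $L$ and a degree-dependent upper bound $2\sqrt{R(D)}$ obtained from the triangle inequality through the Galois orbit of $c$---is the paper's, and your sketch of the upper bound (Proposition \ref{prop:main-ub}) is essentially correct: a diagonal correction of size $\frac{\log 1/\ep}{d}$ with $\ep=1/d^2$, a Minkowski discriminant bound for the off-diagonal sum $([c],[c])$, and the vanishing $h_{\bM_0}(c)=h_{\bM_1}(c)=0$. One point you gloss over: the raw atomic measure $\nu_F$ does not lie in the metric space (it admits no continuous potential), so the triangle inequality of Theorem \ref{thm:triangle-ineq} cannot be applied to it directly; ``removing the diagonal'' is not the mechanism. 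The paper instead regularizes $[c]$ to $[c]_\ep$ by smearing each point mass onto a circle of radius $\ep$, and must then control two error terms: $([c]_\ep,[c]_\ep)$ versus $([c],[c])$ (Lemma \ref{lemma:arch-2}), and the cross term $(\mu_0,[c]_\ep)$ versus $(\mu_0,[c])=0$, the latter requiring the H\"older bound $g(z)\leq \dist(z,M_0)^{1/2}$ for the Green's function near the fractal boundary (Lemmas \ref{lemma:loewner} and \ref{lemma:modulus-of-cont}, via L\"owner's theorem). These are not optional refinements; they produce the $2\sqrt{\ep}$ term in \eqref{eqn:mu0-mu1-triangle-ineq} that enters the final numerical computation.

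The genuine gap is the lower bound. You propose to bound $\langle\mu_0,\mu_1\rangle$ from below by directly estimating the archimedean mutual energy integral ``using the known conformal geometry'' of $M_0$ and $M_1$, and you yourself flag this as the principal difficulty; as stated it is not an argument, and a direct potential-theoretic computation on these fractal boundaries is precisely what the paper avoids. The paper's device is to apply the triangle inequality a second time, in the reverse direction: it chooses explicit algebraic numbers $\al,\beta$ of degree $1023$ (roots of the period-$11$ equations $f_c^{11}(1)=1$ and $f_c^{11}(0)=0$, whose Galois orbits nearly equidistribute along the two equilibrium measures), writes
\[
 d_\infty(\mu_0,\mu_1)\;\geq\; d_\infty([\al]_\ep,[\beta]_\ep) - d_\infty(\mu_0,[\al]_\ep) - d_\infty(\mu_1,[\beta]_\ep),
\]
evaluates the discrete double sums such as $\frac{1}{d^2}\sum_{i\neq j}\log\abs{\al_i-\beta_j}$ numerically (with care for nearly coincident conjugates, handled by the $\log\ep$ and $\log(4\ep)$ cutoffs), and controls the two subtracted terms by the same machinery as in the upper bound. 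Without this step, or an equally concrete substitute, you have no numerical value of $L$ and hence no way to extract the bound $[\bQ(c):\bQ]\leq 108$.
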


Clearly, $S_{0,1}$ only contains algebraic integers, and in fact, it is easy to see that the height of such a set must be bounded, as such a $c$ must contain all of its Galois conjugates in $M_0$, which itself is contained in the disc $D(0,2)=\{z\in\bC : \abs{z}\leq 2\}$. This gives a bound on the height of such $c$ (trivially, $h(c)\leq \log 2$) and thus Theorem \ref{thm:degree-bound} gives an effective bound on the set of possible $c\in S_{0,1}$.

The main idea behind the proof of Theorem \ref{thm:degree-bound} is similar to that of \cite{BakerDeMarco}, namely, that probability measures equally supported on the Galois conjugates of such a number $c$ begin to equidistribute along the equilibrium measures of both $M_0$ and $M_1$, but as these measures are distinct, this cannot be done too closely. The new ideas introduced in this paper which allow us to make these bounds effective are the notion of distance between two (adelic) measures defined via the metric above, and in particular, the triangle inequality for this metric. Together with discrete energy approximation techniques, these ideas can be used to obtain quantitative bounds involving the degree of the algebraic number $c$ (see Propositions \ref{prop:main-ub} and \ref{prop:main-lb} below), leading to our results.

Lastly, we note that the technique used to prove the above result also can be used to give effective bounds for other unlikely intersection problems in arthmetic dynamics. The essential information needed to apply the techniques introduced in this paper are some potential theoretic information about the desired measures, particularly a bound on the modulus of continuity for the associated potential function near the boundary (which we do below in Section \ref{sec:mu0-mu1-ub}), and estimates on the mutual energy distance between the two measures, which can be obtained via finding algebraic numbers whose Galois conjugates roughly equidistribute along these measures (which we do below in Section \ref{sec:mu0-mu1-lb}). 

\section{Background and Notation}\label{sec:background}
\subsection{Basic potential theory}\label{sec:basic-potential-theory}
We will denote by $\bA^1,\bP^1$ the usual affine and projective lines and by $\sA^1,\sP^1$ the Berkovich affine and projective lines, respectively. We refer the reader to \cite{BakerRumelyBook,FRL,BakerBerkArticle} for some basic references on Berkovich space. We define the \emph{standard measures} $\lambda_v$ on $\sP^1(\bC_v)$ to be the probability measures which are either the Dirac measure on the Gauss point of $\sP^1(\bC_v)$ if $v\nmid\infty$ or the normalized Haar measure on the unit circle of $\bC^\times$ if $v\mid\infty$. We let $\Delta$ denote the measure-valued Laplacian on $\sP^1$. We recall the following definition from \cite{FRL}:
\begin{defn}\label{defn:adelic-measure}
 Let $K$ be a number field. We call $\rho=(\rho_v)_{v\in M_K}$ an \emph{adelic measure} if for each $v\in M_K$, $\rho_v$ is a Borel probability measure on $\sP^1(\bC_v)$ which is equal to $\lambda_v$ for all but finitely many $v$ and admits a continuous potential with respect to $\lambda_v$ at the remaining places, that is, for which
 $
  \rho_v-\lambda_v = \Delta g
 $
 for some $g\in C(\sP^1(\bC_v))$.\footnote{Adelic measures are defined twice in \cite{FRL}, first in Defn. 1.1 and later in Defn. 5.1. The condition on the exceptional archimedean places is defined slightly differently in Defn. 5.1 as merely admitting a continuous potential $\rho_v=\Delta u$ locally in some neighborhood of each point on $\sP^1(\bC_v)=\bP^1(\bC_v)$. However one can check easily that the conditions of locally admitting a continuous potential and having a global continuous potential with respect to $\lambda_v$ are in fact equivalent, as any signed Borel measure with total measure zero is the Laplacian of some locally integrable function on $\bP^1(\bC)$, and the local condition implies that this global function must be continuous too.}
\end{defn}

Let $\cM$ denote the space of all adelic measures as defined above. Notice that if $\cM_K$ denotes the set of adelic measures defined over $K$ then, under the natural inclusion maps, we can view our space of all adelic measures as the direct limit over all number fields $K/\bQ$: 
\[
 \cM = \dlim_{K/\bQ} \cM_K.
\]
In fact, we can make an even more advantageous decomposition in the following fashion: Let $\cX$ denote the real vector space of all signed Borel measures spanned by the span of set
 \[\{ \rho-\sigma : \rho,\sigma\in \cM_K\},\]
and for each place $v$ of a number field $K$, let $\cX_v$ denote the real vector space generated by the set of differences of $v$-adic adelic measures $\rho_v-\sigma_v$. We note that if $\rho$ is defined over $K$ and $\sigma$ over $L$ then $\rho-\sigma$ is defined over the compositum $KL$, and hence each element of $\cX$ is defined over some number field, and thus can naturally think of $\cX$ as a direct limit
 \[
  \cX = \dlim_{K} \bigoplus_{v\in M_K} \cX_v,
 \]
 where our number fields are partially ordered by inclusion. Note that by our assumptions for adelic measures, for any $\rho-\sigma$ we will have $\rho_v-\sigma_v = \lambda_v-\lambda_v = 0$ for almost all $v$.

If $\rho$ is an adelic measure over $K$ then we can define canonical heights $h_\rho$ on $\overline{K}$ which satisfy an equidistribution theorem (see \cite{FRL}; for dynamical heights an independent proof is given by \cite{BakerRumely}, and for a more geometric approach see for example \cite{ChambertLoirEqui}; these equidistribution generalize earlier work of \cite{SUZ,Bilu}).

\subsection{Mutual energy}
We fix a number field $K$ over whose completions our measures will be defined, and choose absolute values $\abs{\cdot}_v$ extending the usual absolute values on $\bQ$ and let $\norm{\cdot}_v = \abs{\cdot}_v^{[K_v:\bQ_v]/[K:\bQ]}$, so that the set of absolute values $\{ \norm{\cdot}_v : v\in M_K\}$ satisfies the product formula.

We assume that $\rho_v,\sigma_v$ are signed finite Borel measures on $\sP^1(\bC_v)$. We define, when it exists, the \emph{local mutual energy pairing} to be
\begin{equation}
 \lp\rho_v,\sigma_v\rp_v = {\iint}_{\sA^1_v\times\sA^1_v\setminus\Diag_v} -\log \norm{x-y}_v\,d\rho_v(x)\,d\sigma_v(y)
\end{equation}
 where $\sA^1_v=\sA^1(\bC_v)$ denotes the Berkovich affine line over $\bC_v$ and $\Diag_v = \{ (x,x) : x\in\bC_v\}$ (note we are only excluding the classical points of the diagonal). Throughout, in the non-archimedean case on the Berkovich line, the kernel $\norm{x-y}_v$ in the above integral should be read as the natural extension to the Berkovich line of this distance, which (up to normalization of the absolute value) is denoted by $\sup\{x,y\}$ in the article of Favre and Rivera-Letelier \cite[\S 3.3]{FRL} and as the \emph{Hsia kernel} $\delta(x,y)_\infty$ in the book of Baker and Rumely \cite[\S 4]{BakerRumelyBook}.
 
 When $\rho=(\rho_v),\sigma=(\sigma_v)$ are adelic measures we will sometimes write $\lp\rho,\sigma\rp_v$ instead of $\lp\rho_v,\sigma_v\rp_v$ to ease notation. When well-defined it is easy to see that the local mutual energy is symmetric. The local mutual energy exists in particular when $\rho_v$ and $\sigma_v$ are either Borel probability measures of continuous potentials with respect to the standard measure, that is, $\rho_v-\lambda_v=\Delta g$ for some $g\in C(\sP^1(\bC_v))$, or are probability measures supported on a finite subset of $\bP^1(\overline K)$. In particular this applies for our adelic measures, and extends naturally by bilinearity to the vector space of signed measures arising from these measures. We refer the reader to \cite{FRL} for proofs of these results.
 
 We define the \emph{mutual energy pairing} as the sum of the local mutual energies:
\begin{equation}
 \lp\rho,\sigma\rp = \sum_{v\in M_K} \lp \rho,\sigma\rp_v.
\end{equation}
Notice that the above sum is in fact finite for the measures under consideration: if $\rho,\sigma$ are adelic measures, then $\rho_v=\lambda_v=\sigma_v$ at all but finitely many places and $\lp\lambda_v,\lambda_v\rp_v=0$ for each $v\in M_K$, and if either is a probability measure with support a finite subset of $\bP^1(\overline K)$ almost all valuations will be trivial as usual (whether it is paired with an adelic measure or another such probability measure). Further, the choice of normalization for our $v$-adic absolute values ensures that the value above is well-defined under extension of our ground field, so it is an absolute quantity which does not depend on the particular choice of base field.
%\subsection{The canonical height}

For $\al\in\bP^1(\overline{K})$, let $[\al]$ denote the probability measure supported equally on the Galois conjugates of $\al$ over $K$, that is,
\begin{equation}
 [\al]_K = [\al] = \frac{1}{\# G_K\al} \sum_{z\in G_K \al} \delta_z
\end{equation}
where $G_K=\Gal(\overline{K}/K)$ and $\delta_z$ denotes the Dirac measure at $z$, which for $z\in\bP^1(\overline{K})$ we interpret as the adelic measure of the point mass at $z$ at each place. (When the base number field $K$ is understood, we may drop the subscript on $[\al]_K$.) Then the canonical height $h_\rho:\bP^1(\overline K)\ra \bR$ associated to $\rho$ is defined to be 
\begin{equation}
 h_\rho(\al) = \frac{1}{2} \lp \rho-[\al], \rho-[\al]\rp.
\end{equation}
One can check that for the standard measure, $\lp\lambda,\lambda\rp=\lp[\al],[\al]\rp=0$ and $\lp\lambda,[\al]\rp_v = \log^+ \norm{\al}_v$, and so $h_\lambda=h$ coincides with the usual absolute logarithmic Weil height. When $\rho=(\rho_{\phi,v})_{v\in M_K}$ is the adelic set of canonical measures associated to iteration of a rational map $\phi$, then by \cite[Thm. 4]{FRL} we have $h_\rho=h_\phi$ is the usual Call-Silverman dynamical height \cite{CallSilverman}.

For an adelic measure $\rho$ defined over $K$ we define the set
\begin{equation}
 Z(\rho) = \{ \al\in \bP^1(\Kbar) = h_\rho(\al) \leq 0 \}.
\end{equation} 
Notice that if if $\rho$ is the canonical adelic measure associated to a rational map of degree at least $2$, then in fact $Z(\rho)$ is precisely the set of preperiodic points. 

\subsection{Generalized Mandelbrot sets}
Let us recall some of the notation we will use from \cite{BakerDeMarco}. Let $f_c(z) = z^2 + c$ for a (usually complex) parameter $c$. The Mandelbrot set $M=M_0$ is defined as
\begin{equation}
 M_{0} = \{ c\in\bC : \sup_n \abs{f^n_c(0)}< \infty\}
\end{equation}
where $f^n_c$ denotes the $n$th iterate of $f_c$. We can define an analogous set for different inital values; in particular, we let
\begin{equation}
 M_1 = \{ c\in\bC : \sup_n \abs{f^n_c(1)}< \infty\}.
\end{equation}
Both sets $M_0$ and $M_1$ are compact in $\bC$ with connected complements and logarithmic capacity $1$. We refer the reader to \cite[Proposition 3.3]{BakerDeMarco} for proofs and more details regarding the generalized Mandelbrot sets. We denote by $\mu_0$ the equilibrium measure of the set $M_0$ in the sense of complex potential theory, and by $\mu_1$ the equilibrium measure of the set $M_1$.

It is worth noting that we can (and should) view the above measures as the archimedean components of adelic measures $\boldsymbol{\mu}_0 = (\mu_{0,p})_{p\in M_\bQ}$ and $\boldsymbol{\mu}_1 = (\mu_{1,p})_{p\in M_\bQ}$  defined over $\bQ$ with $\mu_{0,\infty} = \mu_0$ and $\mu_{0,p}=\lambda_p$ for $p\nmid \infty$, and likewise for $\boldsymbol{\mu}_1$. These are equilibrium measures for the adelic sets $\mathbb{M}_0$ and $\mathbb{M}_1$, which consist of $M_0$ and $M_1$ at the archimedean prime, respectively, and the Berkovich unit disc at each finite prime. Baker and DeMarco thus construct a canonical height $h_{\mathbb{M}_0}$ and $h_{\mathbb{M}_1}$ relative to each set. We refer the reader to \cite{BakerDeMarco} for more details on this construction. As the non-archimedean components of these measures are trivial, we only require an analysis at the archimedean place for our desired application. Thus we will write $d(\mu_0,\mu_1)$ below where we might otherwise write $d(\boldsymbol{\mu}_0, \boldsymbol{\mu}_1)$, etc.

\subsection{Regularization of measures}\label{sec:meas-reg}
For this section (and particularly in Section \ref{sec:application}), we will now assume our base number field $K=\bQ$. As above for $z\in\bC$, $\delta_z$ denote the Dirac point mass at $z$. To each algebraic number $\alpha\in\Qbar$ of degree $d=[\bQ(\alpha):\bQ]$ we denote by 
\begin{equation}
[\al] = \frac{1}{d} \sum_{z\in G_\bQ \al} \delta_z 
\end{equation}
the probability measure on $\bC$ supported equally on each Galois conjugate of $\al$. We wish to find a regularization of $[\al]$ supported on $\bC$ which admits a continuous potential. To do this, we use a regularization introduced in \cite{F-P-quant} (which itself is quite similar to the technique used in \cite[\S 2]{FRL}).  

Specifically, for a given Dirac point mass $\delta_x$ for $x\in \bR$, we will define $\delta_{x,\ep}$ to be the normalized unit Lebesgue measure of the circle $\{ z\in\bC : \abs{z-x} = \ep\}$. We then define, for $\alpha$ an algebraic number of degree $d$,
\begin{equation}
 [\al]_\ep = \frac{1}{d} \sum_{z\in G_\bQ\alpha} \delta_{z,\ep}.
\end{equation}
It is immediate that these measures admit a continuous potential as defined above.

\section{The metric of mutual energy}
For adelic measures $\rho,\sigma$ we define the \emph{Arakelov-Zhang height pairing} to be
\begin{equation}
\langle \rho,\sigma\rangle = \frac{1}{2} \lp \rho-\sigma, \rho-\sigma\rp.
\end{equation}
It seems sensible to also suggest that we might use the notation $h_\rho(\sigma)$ in analogy to the definition of $h_\rho(\al)$, however, we note that we insist on averaging $\al$ over its Galois conjugates, and we make no such requirement on $\sigma$ (indeed, we have imposed no requirement that our adelic measures are $\Aut(\bC_v/K_v)$-stable at each place, though there are reasons we may want to restrict to such measures as a more interesting class; see Remark \ref{rmk:Zfinite} for more on this). Nevertheless, this similarity suggests that in fact the symmetry in \eqref{eqn:pst-heightzero} is quite natural. We note that if we define a \emph{local Arakelov-Zhang pairing} by
\begin{equation*}
 \langle \rho,\sigma\rangle_v = \frac{1}{2}\lp \rho-\sigma,\rho-\sigma\rp_v
\end{equation*}
then in fact the Arakelov-Zhang pairing can be expressed as
\begin{equation}
\langle \rho,\sigma\rangle = \sum_{v\in M_K} \langle \rho,\sigma\rangle_v.
\end{equation}

We will show that our pairing agrees with that defined in \cite{PST-pairing} when $\rho,\sigma$ are the canonical measures associated to iteration of rational maps of degree at least $2$. It is obvious from the definition that the height pairing satisfies
\[
 \langle \rho,\sigma\rangle = \langle \sigma,\rho\rangle\quad\text{and}\quad \langle\rho,\rho\rangle = 0.
\]

While for general signed Borel measures $\mu$ we may have $\lp \mu,\mu\rp_v<0$ for some $v$ (see \cite[\S 6]{FRL}), Favre and Rivera-Letelier show that if $\mu(\sP^1(\bC_v))=0$ and $\mu$ has a continuous potential, then in fact, $\lp \mu_v,\mu_v\rp_v\geq 0$, with equality if and only if $\mu_v=0$. It follows that if $\rho,\sigma$ are two adelic heights then $\mu=\rho-\sigma$ meets this criterion at each place, and thus locally we have $\langle \rho,\sigma\rangle_v\geq 0$ with equality if and only if $\rho_v=\sigma_v$, and so in fact we have:
\begin{prop}
Let $\rho,\sigma$ be adelic measures. Then $\langle \rho,\sigma\rangle \geq 0$, with equality if and only if $\rho=\sigma$.
\end{prop}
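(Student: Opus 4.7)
The plan is to reduce the proposition to the local positivity result of Favre and Rivera-Letelier stated immediately before it, applied place by place, and then sum.

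First I would set $\mu = \rho - \sigma$, interpreted placewise as $\mu_v = \rho_v - \sigma_v$. Since $\rho_v$ and $\sigma_v$ are Borel probability measures on $\sP^1(\bC_v)$, the total mass $\mu_v(\sP^1(\bC_v)) = 0$ at every $v$. Moreover, because $\rho$ and $\sigma$ are adelic measures, at every place we can write $\rho_v - \lambda_v = \Delta g_v$ and $\sigma_v - \lambda_v = \Delta h_v$ for continuous functions $g_v,h_v$ on $\sP^1(\bC_v)$, so $\mu_v = \Delta(g_v - h_v)$ has a continuous potential with respect to $\lambda_v$. Thus $\mu_v$ satisfies exactly the hypotheses (total mass zero, continuous potential) under which the Favre--Rivera-Letelier positivity asserts $\lp \mu_v,\mu_v\rp_v \geq 0$, with equality iff $\mu_v = 0$.

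Next I would invoke this at each $v$ to get
\begin{equation*}
 \langle \rho,\sigma\rangle_v = \tfrac{1}{2}\lp \mu_v,\mu_v\rp_v \geq 0,
\end{equation*}
and note that at all but finitely many $v$ we have $\rho_v = \lambda_v = \sigma_v$, so $\mu_v = 0$ and $\langle\rho,\sigma\rangle_v = 0$ there. Consequently the sum
\begin{equation*}
 \langle \rho,\sigma\rangle = \sum_{v\in M_K} \langle \rho,\sigma\rangle_v
\end{equation*}
is a finite sum of nonnegative terms, so $\langle \rho,\sigma\rangle \geq 0$.

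For the equality case, $\langle \rho,\sigma\rangle = 0$ forces each nonnegative local summand to vanish, so by the equality clause of the FRL result, $\mu_v = 0$, i.e.\ $\rho_v = \sigma_v$, at every place $v$; this is precisely the statement $\rho = \sigma$ as adelic measures. The converse is immediate from $\langle\rho,\rho\rangle = 0$. There is no substantive obstacle: the only point requiring care is verifying that $\mu_v$ has a global continuous potential with respect to $\lambda_v$ (not merely locally), and this is handled by the equivalence noted in the footnote to Definition \ref{defn:adelic-measure} together with the observation above that $\mu_v = \Delta(g_v - h_v)$ with $g_v - h_v \in C(\sP^1(\bC_v))$.
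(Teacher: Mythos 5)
Your proof is correct and is essentially the paper's own argument: the proposition is deduced exactly as in the preceding paragraph of the text, by applying the Favre--Rivera-Letelier local positivity (for total-mass-zero measures with continuous potential) to $\mu_v=\rho_v-\sigma_v$ at each place and summing the finitely many nonzero nonnegative local terms. No gaps.
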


We also note as an almost immediate consequence of our definition:
\begin{prop}
 Let $\rho,\sigma$ be adelic measures. Then
 \begin{equation}
 \langle\rho,\sigma\rangle = h_\rho(\infty) + h_\sigma(\infty) + \sum_{v\in M_K} \iint_{\sA^1(\bC_v)\times \sA^1(\bC_v)} \log\norm{x-y}_v \,d\rho_v(x)\,d\sigma_v(y).
 \end{equation}
\end{prop}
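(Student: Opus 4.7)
The plan is to unfold the definitions and exploit the fact that the point $\infty$ lies outside of $\sA^1(\bC_v)$ at every place, so that it contributes trivially to the local mutual energy pairing. First, using the bilinearity of $\lp\cdot,\cdot\rp$, I would expand
\[
\langle \rho,\sigma\rangle \;=\; \tfrac{1}{2}\lp\rho-\sigma,\rho-\sigma\rp \;=\; \tfrac{1}{2}\lp\rho,\rho\rp + \tfrac{1}{2}\lp\sigma,\sigma\rp - \lp\rho,\sigma\rp.
\]

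Second, I would apply the same bilinear expansion to $h_\rho(\infty) = \tfrac{1}{2}\lp \rho - [\infty], \rho - [\infty]\rp$. The key observation is that at every place $v$, the measure $[\infty]_v = \delta_\infty$ is supported at the point $\infty$, which does not lie in $\sA^1(\bC_v)$; hence its restriction to the domain of integration $\sA^1\times\sA^1\setminus\Diag_v$ is the zero measure, and both $\lp\rho,[\infty]\rp_v$ and $\lp[\infty],[\infty]\rp_v$ vanish identically. Summing over places gives $h_\rho(\infty) = \tfrac{1}{2}\lp\rho,\rho\rp$ and similarly $h_\sigma(\infty) = \tfrac{1}{2}\lp\sigma,\sigma\rp$.

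Third, I would verify the identity
\[
-\lp\rho,\sigma\rp \;=\; \sum_{v\in M_K} \iint_{\sA^1(\bC_v)\times\sA^1(\bC_v)} \log\norm{x-y}_v \,d\rho_v(x)\,d\sigma_v(y),
\]
which differs from the defining expression for $\lp\rho,\sigma\rp_v$ only by the inclusion of the classical diagonal. Since adelic measures admit continuous potentials with respect to $\lambda_v$ (and a logarithmic singularity $-\log\norm{x-x_0}_v$ is not continuous at a classical $x_0$), no $\rho_v$ nor $\sigma_v$ can carry an atom at a classical point, so the classical diagonal $\Diag_v$ is $\rho_v\times\sigma_v$-null at every place and the integrals agree. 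Combining this with the two expansions above yields the claimed formula. I do not anticipate a real technical obstacle; the proof is essentially a direct manipulation, with the one conceptual point being the "invisibility" of $\infty$ to integrals against the Hsia kernel on $\sA^1\times\sA^1$, which is precisely what makes $h_\rho(\infty)$ coincide with the self-energy $\tfrac{1}{2}\lp\rho,\rho\rp$.
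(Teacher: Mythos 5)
Your proof is correct and follows essentially the same route as the paper: identify $h_\rho(\infty)$ and $h_\sigma(\infty)$ with the self-energies $\tfrac12\lp\rho,\rho\rp$ and $\tfrac12\lp\sigma,\sigma\rp$ (since $[\infty]$ is invisible to integrals over $\sA^1_v\times\sA^1_v$), note that $\Diag_v$ is $\rho_v\otimes\sigma_v$-null, and expand bilinearly. If anything, you supply the justification (no atoms at classical points because of the continuous-potential hypothesis) that the paper's one-line proof merely asserts.
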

\begin{proof}
 We merely note, as is easy to check from our definitions, that
 \[
  h_\rho(\infty)=\frac{1}{2}\lp \rho-[\infty],\rho-[\infty]\rp=\frac{1}{2}\lp \rho,\rho\rp,
 \]
 and likewise $h_\sigma(\infty)=\frac{1}{2}\lp \sigma,\sigma\rp$, and further that since $\rho,\sigma$ are adelic measures, the set $\Diag_v$ is of $\rho_v\otimes\sigma_v$-measure zero, so the result follows.
\end{proof}
As a corollary of this result, we recover \cite[Prop. 16]{PST-pairing} for adelic measures:
\begin{cor}
 Let $\lambda$ denote the standard adelic measure, which is the Dirac measure at the Gauss point at the finite places and the normalized Haar measure of the unit circle in the complex plane at the infinite places. Let $\rho$ be an adelic measure. Then
 \begin{equation}
  \langle \rho,\lambda\rangle = h_\rho(\infty) + \sum_{v\in M_K} \int_{\sA^1(\bC_v)} \log^+ \norm{x}_v\,d\rho_v(x).
 \end{equation}
\end{cor}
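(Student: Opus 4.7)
The plan is to specialize the displayed formula of the preceding proposition to the case $\sigma = \lambda$. Substituting $\sigma = \lambda$ gives
\[
\langle \rho,\lambda\rangle = h_\rho(\infty) + h_\lambda(\infty) + \sum_{v\in M_K} \iint_{\sA^1(\bC_v)\times \sA^1(\bC_v)} \log\norm{x-y}_v \,d\rho_v(x)\,d\lambda_v(y),
\]
so the claim reduces to verifying two separate facts: first, that $h_\lambda(\infty) = 0$; and second, that for each place $v$ of $K$ and each $x \in \sA^1(\bC_v)$,
\[
\int_{\sA^1(\bC_v)} \log\norm{x-y}_v\,d\lambda_v(y) = \log^+\norm{x}_v.
\]
Granting both, Fubini (which applies since $\rho_v$ has a continuous potential with respect to $\lambda_v$ and hence the iterated energy integral is absolutely convergent) lets us integrate the second identity against $d\rho_v(x)$ and sum over $v$ to conclude.

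For the first fact, $h_\lambda$ is the canonical height attached to the standard adelic measure $\lambda$, which the author has already identified with the naive logarithmic Weil height $h$. Since $h(\infty) = 0$, we obtain $h_\lambda(\infty)=0$ immediately. Alternatively, using the definition $h_\lambda(\infty) = \tfrac12\lp \lambda-[\infty],\lambda-[\infty]\rp$, one can verify that the $v$-adic kernel integrates to $0$ at every place because $\log^+\norm{\infty}_v$ terms cancel against $\lp \lambda,\lambda\rp_v = 0$.

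For the second fact, one argues place-by-place. At a non-archimedean $v$, the measure $\lambda_v$ is the Dirac mass at the Gauss point $\zeta_G \in \sP^1(\bC_v)$; the Hsia kernel satisfies $\norm{x-\zeta_G}_v = \max(1,\norm{x}_v)$ for every $x \in \sA^1(\bC_v)$, so the integral collapses to $\log^+\norm{x}_v$. At an archimedean $v$, $\lambda_v$ is the normalized Haar measure on the unit circle of $\bC^\times$, and the identity
\[
\int_0^{2\pi} \log\abs{x-e^{i\theta}}\,\frac{d\theta}{2\pi} = \log^+\abs{x}
\]
is the classical Jensen formula; the factor of $[K_v:\bQ_v]/[K:\bQ]$ built into $\norm{\cdot}_v$ is carried through the logarithm without affecting the identity. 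No step here is genuinely difficult; the only thing one must be careful about is the potential-theoretic computation of the logarithmic potential of $\lambda_v$ at each place, which is essentially the statement that $\lambda_v$ is the equilibrium measure of the closed unit ball at $v$.
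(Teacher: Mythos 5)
Your proposal is correct and follows exactly the route of the paper's own (much terser) proof: specialize the preceding proposition to $\sigma=\lambda$, note $h_\lambda(\infty)=0$, and identify the $v$-adic potential of $\lambda_v$ as $\log^+\norm{x}_v$ (Hsia kernel at the Gauss point non-archimedeanly, Jensen's formula archimedeanly). The extra place-by-place verification you supply is exactly what the paper leaves to the reader.
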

\begin{proof}
 Note that $h_\lambda(\infty)=h(\infty)=0$ for the standard height, and that the $v$-adic potential function is $\log^+\norm{x}_v$ for the standard measure.
\end{proof}

We now prove the key result of this paper, which by the results of Petsche, Szpiro, and Tucker is enough to conclude that our pairing agrees with the Arakelov-Zhang pairing when $\rho,\sigma$ arise from iterating rational maps of degree at least $2$:
\begin{thm}\label{thm:limit-thm}
 Let $\rho,\sigma$ be adelic measures defined over the number field $K$. If $\{\al_n\}\subset \bP^1(\overline{K})$ is a sequence of mutually distinct algebraic numbers, then
 \[
  h_\sigma(\al_n) \ra \langle \rho,\sigma\rangle\quad\text{whenever}\quad h_\rho(\al_n) \ra 0.
 \]
\end{thm}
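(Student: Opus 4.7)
The plan is to expand all three quantities via the bilinearity of the mutual energy pairing, reducing the theorem to showing that a single cross term vanishes in the limit. Concretely, writing out the definitions,
\begin{align*}
h_\rho(\alpha_n) &= \tfrac12 \lp\rho,\rho\rp - \lp\rho,[\alpha_n]\rp + \tfrac12 \lp[\alpha_n],[\alpha_n]\rp,\\
h_\sigma(\alpha_n) &= \tfrac12 \lp\sigma,\sigma\rp - \lp\sigma,[\alpha_n]\rp + \tfrac12 \lp[\alpha_n],[\alpha_n]\rp,\\
\langle \rho,\sigma\rangle &= \tfrac12 \lp\rho,\rho\rp - \lp\rho,\sigma\rp + \tfrac12 \lp\sigma,\sigma\rp,
\end{align*}
and a short rearrangement produces the clean identity
$$h_\sigma(\alpha_n) - h_\rho(\alpha_n) - \langle \rho,\sigma\rangle \;=\; \lp \rho-\sigma,\; [\alpha_n]-\rho\rp.$$
Observe that the (diverging) self-pairings $\lp [\alpha_n],[\alpha_n]\rp$ cancel between $h_\rho(\alpha_n)$ and $h_\sigma(\alpha_n)$, so only the mixed cross term must be controlled.

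It now suffices to show the cross term tends to $0$. My preferred route is a Cauchy--Schwarz bound for the bilinear form:
$$\bigl|\lp \rho-\sigma,\, [\alpha_n]-\rho\rp\bigr|^2 \;\leq\; \lp \rho-\sigma,\rho-\sigma\rp \cdot \lp [\alpha_n]-\rho,\,[\alpha_n]-\rho\rp \;=\; 4\langle \rho,\sigma\rangle \cdot h_\rho(\alpha_n),$$
which tends to $0$ by hypothesis. Combined with the identity above, this yields $h_\sigma(\alpha_n) \to \langle \rho,\sigma\rangle$, and in fact supplies the quantitative rate $|h_\sigma(\alpha_n) - \langle \rho,\sigma\rangle| \leq h_\rho(\alpha_n) + 2\sqrt{\langle \rho,\sigma\rangle\, h_\rho(\alpha_n)}$.

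The main obstacle is justifying this Cauchy--Schwarz inequality. The pairing is positive semi-definite on signed measures of total mass zero with continuous potentials (by Favre--Rivera-Letelier, as cited in the paper), from which Cauchy--Schwarz follows by the standard discriminant argument applied to $\lp \mu + t\nu,\, \mu + t\nu\rp \geq 0$. The measure $\rho-\sigma$ satisfies the hypotheses since $\rho,\sigma$ are adelic, but $[\alpha_n]-\rho$ has Dirac contributions with logarithmically singular potentials. I would address this by first approximating the Dirac masses by the regularizations $[\alpha_n]_\epsilon$ from Section \ref{sec:meas-reg} (supplemented by an analogous smoothing at the finitely many nontrivial non-archimedean places, if needed), applying the discriminant inequality for the smoothed measures, and then passing to the limit $\epsilon \to 0$ using bilinearity and the fact that $\lp \rho-\sigma, [\alpha_n]_\epsilon \rp \to \lp \rho-\sigma, [\alpha_n]\rp$ and $\lp [\alpha_n]_\epsilon - \rho, [\alpha_n]_\epsilon - \rho\rp \to 2 h_\rho(\alpha_n)$.

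A cleaner alternative, which may be preferable in the final write-up, is to bypass Cauchy--Schwarz entirely and invoke the equidistribution theorem of \cite{FRL} for adelic measures: the hypothesis $h_\rho(\alpha_n) \to 0$ forces $[\alpha_n]_v \to \rho_v$ weakly at every place $v$ of $K$, and since $\rho-\sigma$ admits a continuous potential $u_{\rho-\sigma,v}$ at each of the finitely many places where $\rho_v\neq \sigma_v$ (and vanishes identically elsewhere), this weak convergence together with continuity immediately gives
$$\lp \rho-\sigma, [\alpha_n]-\rho\rp \;=\; \sum_{v\in M_K} \int u_{\rho-\sigma,v}\,d\bigl([\alpha_n]_v - \rho_v\bigr) \;\longrightarrow\; 0.$$
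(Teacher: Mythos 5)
Your algebraic identity $h_\sigma(\al_n) - h_\rho(\al_n) - \langle\rho,\sigma\rangle = \lp\rho-\sigma,[\al_n]-\rho\rp$ is correct and is in fact a cleaner bookkeeping than the paper's, since it makes the discriminant term $\lp[\al_n],[\al_n]\rp$ and the term $\lp\rho,[\al_n]\rp$ cancel automatically rather than being handled via the product formula. Your ``cleaner alternative'' for killing the cross term --- equidistribution from \cite{FRL} plus continuity of the potential of $\rho_v-\sigma_v$ at the finitely many places where it is nonzero --- is precisely the paper's argument, and with it the proof is complete.

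The Cauchy--Schwarz route you describe first, however, has a genuine gap in the limiting step. For fixed $n$, it is \emph{not} true that $\lp[\al_n]_\ep-\rho,[\al_n]_\ep-\rho\rp \to 2h_\rho(\al_n)$ as $\ep\to 0$: the regularized self-energy contains the Robin-constant terms $\lp\delta_{z,\ep},\delta_{z,\ep}\rp=-\log\ep$ (see Lemma \ref{lemma:arch-2}), so $\lp[\al_n]_\ep,[\al_n]_\ep\rp$ diverges like $(-\log\ep)/\deg(\al_n)$, whereas $\lp[\al_n],[\al_n]\rp$ is defined with the diagonal removed. Hence the Cauchy--Schwarz bound for the smoothed measures becomes vacuous as $\ep\to 0$. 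Repairing this requires coupling $\ep=\ep_n$ to $n$, using Northcott to force $\deg(\al_n)\to\infty$, and controlling the resulting $(\log d_n)/d_n$ error --- i.e., essentially reproving the quantitative equidistribution of \cite{FRL} rather than citing it. Relatedly, the global form is only \emph{essentially} nonnegative on the span including the $[\al_n]$ (heights can be slightly negative at finitely many points), so the clean discriminant argument and the quantitative rate $|h_\sigma(\al_n)-\langle\rho,\sigma\rangle|\leq h_\rho(\al_n)+2\sqrt{\langle\rho,\sigma\rangle\,h_\rho(\al_n)}$ are not justified as stated. I would drop the Cauchy--Schwarz discussion and run the argument through your identity followed by the equidistribution step.
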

\begin{proof}
 As above we let $[\al_n]$ denote the probability measure supported equally on the $K$-Galois conjugates of $\al_n$, and as above we think of $[\al_n]$ as an adelic measure.  Let us compute $h_\sigma(\al_n)$:
 \begin{equation*}%\label{eqn:sigmaheight}
 \begin{aligned}
  h_\sigma(\al_n) &= \frac{1}{2} \sum_{v\in M_K} \lp \sigma-[\al_n], \sigma-[\al_n]\rp_v\\
  &= \frac{1}{2} \sum_{v\in M_K} \left[ \lp \sigma,\sigma\rp_v -2\lp \sigma,[\al_n]\rp_v + \lp [\al_n],[\al_n]\rp_v\right]
 \end{aligned}
 \end{equation*}
 We will first analyze the middle term, which is of the most interest. Write
 \[
  -2\lp \sigma,[\al_n]\rp_v = -2\lp \sigma-\rho,[\al_n]\rp_v + 2\lp \rho,[\al_n]\rp_v.
 \]
 The condition that $h_\rho(\al_n)\ra 0$ implies by the equidistribution theorem \cite[Thm. 2]{FRL} that we have weak convergence of measures
 \[
  [\al_n] \ra \rho_v\quad\text{at each place }v.
 \]
 By our assumptions on adelic heights, we know there exists a continuous $f_v\in C(\sP^1(\bC_v))$ such that
 $
  \rho_v - \sigma_v = \Delta f_v.
 $
 Let $g_v(x) = \int_{\sA^1(\bC_v)} -\log \abs{x-y}_v\,d(\sigma_v-\rho_v)$. By Lemmas 2.5 and 4.4 of \cite{FRL}, we know that $g_v$ is integrable with respect to $[\al_n]$. Since $\sigma_v-\rho_v$ is an adelic measure, it does not charge the point $\infty\in\sP^1(\bC_v)$, so $\Delta g_v=\sigma_v-\rho_v$ (see for example \cite[Ex. 5.17]{BakerRumelyBook}) and thus we must have that $f_v-g_v$ is constant, so in fact $g_v$ is continuous everywhere as well. If we let $G_v(x)=\frac{[K_v:\bQ_v]}{[K:\bQ]} g_v(x)$, then we can conclude that
 %We can normalize $f_v$ by the addition of a constant to ensure that $f_v(\infty)=0$. With this choice of normalization, it follows from Frostman's theorem (see e.g. \cite[Thm. 6.17]{BakerRumelyBook}) and our normalization of the Laplacian that in fact
 %\[
 % f_v(x) = \int_{\sA^1_v} \log\abs{x-y}_v\,d(\sigma_v-\rho_v)(y).
 %\]
 %So if we let let $g_v(x) = -\frac{[K_v:\bQ_v]}{[K:\bQ]} f_v(x)$, then we have
 \begin{align*}
  \lim_{n\ra\infty}\lp \sigma-\rho,[\al_n]\rp_v
  &= \lim_{n\ra\infty}\int_{\sA^1_v\times \sA^1_v\setminus\Diag_v} -\log\norm{x-y}_v\,d(\sigma_v-\rho_v)(x)\,d[\al_n](y)\\
  &= \lim_{n\ra\infty}\int_{\sA^1(\bC_v)} G_v(x)\,d[\al_n](x)\\
  &= \lim_{n\ra\infty}\int_{\sP^1(\bC_v)} G_v(x)\,d[\al_n](x),
\end{align*}
 where the last equality follows since we may as well assume $\al_n\neq \infty$ for all $n$, and we can apply weak convergence of measures $[\al_n]\ra \rho_v$ on $\sP^1(\bC_v)$ to conclude
\begin{align*}
  \lim_{n\ra\infty}\lp \sigma-\rho,[\al_n]\rp_v&=\int_{\sP^1(\bC_v)} G_v(x)\,d\rho_v(x)
  = \int_{\sA^1(\bC_v)} G_v(x)\,d\rho_v(x)\\
  &= \int_{\sA^1\times \sA^1\setminus\Diag} -\log\norm{x-y}_v\,d(\sigma_v-\rho_v)(y)\,d\rho_v(x)\\
  &= \lp \sigma-\rho,\rho\rp_v.
 \end{align*}
 So we see that 
 \[
  \lp \sigma-\rho,[\al_n]\rp_v\ra \lp \sigma-\rho,\rho\rp_v\quad\text{for each }v\in M_K.
 \]
 Now, by our assumption that $\rho,\sigma$ are adelic measures, we have $\rho_v=\sigma_v=\lambda_v$ at all but finitely many places $v$, independent of $n$, so in fact almost all terms are zero independent of $n$, so we can say that
 \begin{equation}\label{eqn:sigma-aln}
 \sum_{v\in M_K}\lp \sigma-\rho,[\al_n]\rp_v\ra \sum_{v\in M_K} \lp \sigma-\rho,\rho\rp_v\quad\text{as}\quad n\ra\infty.
 \end{equation}

 Now,
\[
 h_\rho(\al_n) = \frac{1}{2} \sum_{v\in M_K} \left[ \lp \rho,\rho\rp_v -2\lp \rho,[\al_n]\rp_v + \lp [\al_n],[\al_n]\rp_v\right]\ra 0
\]
by assumption. Notice that
\[
 \lp [\al_n],[\al_n]\rp_v = \frac{1}{[K(\al_n):K]^2} \log \bigg|\prod_{\substack{\beta,\gamma\in G_K\al_n\\ \beta\neq\gamma}} (\beta-\gamma)\bigg|_v,
\]
so by the product formula,
\begin{equation}\label{eqn:disc}
\sum_v \lp [\al_n],[\al_n]\rp_v =0,
\end{equation}
and thus we can conclude that
\begin{equation}\label{eqn:rho-aln}
 \sum_v 2\lp \rho,[\al_n]\rp_v\ra \sum_v \lp \rho,\rho\rp_v.
\end{equation}

Combining \eqref{eqn:sigma-aln}, \eqref{eqn:disc}, and \eqref{eqn:rho-aln}, we obtain the desired result:
\begin{multline*}
 \lim_{n\ra\infty} h_\sigma(\al_n) = \lim_{n\ra\infty} \frac{1}{2} \sum_{v\in M_K} \left[ \lp \sigma,\sigma\rp_v -2\lp \sigma,[\al_n]\rp_v + \lp [\al_n],[\al_n]\rp_v\right]\\
 = \lim_{n\ra\infty} \frac{1}{2} \bigg( \sum_{v\in M_K} \lp \sigma,\sigma\rp_v -\sum_{v\in M_K} 2\lp \sigma-\rho,[\al_n]\rp_v - \sum_{v\in M_K} 2 \lp \rho,[\al_n]\rp_v \\
 + \sum_{v\in M_K} \lp [\al_n],[\al_n]\rp_v\bigg)\\
 = \frac{1}{2} \sum_{v\in M_K} \left[ \lp \sigma,\sigma\rp_v -2\lp \sigma-\rho,\rho\rp_v - \lp \rho,\rho\rp_v\right]\\
 = \frac{1}{2}\sum_{v\in M_K} \lp \sigma-\rho,\sigma-\rho\rp_v
 = \frac{1}{2}\lp \sigma-\rho,\sigma-\rho\rp = \langle \sigma,\rho\rangle.\qedhere
\end{multline*} 
\end{proof}

\subsection{The mutual energy metric}
Let $\cX$ denote the real vector space of all signed Borel measures spanned by the span of set \[\{ \rho-\sigma : \rho,\sigma\text{ are adelic measures}\},\] and for each place $v$ of a number field $K$, let $\cX_v$ denote the real vector space generated by the set of differences of $v$-adic adelic measures $\rho_v-\sigma_v$. We note that if $\rho$ is defined over $K$ and $\sigma$ over $L$ then $\rho-\sigma$ is defined over the compositum $KL$, and hence each element of $\cX$ is defined over some number field, and thus can naturally think of $\cX$ as a direct limit
 \[
  \cX = \dlim_{K} \bigoplus_{v\in M_K} \cX_v,
 \]
 where our number fields are partially ordered by inclusion. Note the direct sum is used here as it easy to see that by our assumptions for adelic measures, for any $\rho-\sigma$ we will have $\rho_v-\sigma_v = \lambda_v-\lambda_v = 0$ for almost all $v$.

Using this result, we are now in a position to prove our main theorem.
\begin{proof}[Proof of Theorem \ref{thm:triangle-ineq}]
 First, we prove that $d(\rho,\sigma)$ satisfies the triangle inequality on the space of adelic measures. To see this, let $\cX$ denote the real vector space of all signed Borel measures spanned by the span of set \[\{ \rho-\sigma : \rho,\sigma\text{ are adelic measures}\},\] and for each place $v$ of a number field $K$, let $\cX_v$ denote the real vector space generated by the set of differences of $v$-adic adelic measures $\rho_v-\sigma_v$. %We note that if $\rho$ is defined over $K$ and $\sigma$ over $L$ then $\rho-\sigma$ is defined over the compositum $KL$, and hence each element of $\cX$ is defined over some number field, and thus can naturally think of $\cX$ as a direct limit
 %\[
 % \cX = \dlim_{K} \bigoplus_{v\in M_K} \cX_v,
 %\]
 %where our number fields are partially ordered by inclusion. Note the direct sum is used here as it easy to see that by our assumptions for adelic measures, for any $\rho-\sigma$ we will have $\rho_v-\sigma_v = \lambda_v-\lambda_v = 0$ for almost all $v$.
 
 It follows from Propositions 2.6 and 4.5 of \cite{FRL} that at each place $v$ the energy pairing $\lp \cdot,\cdot\rp_v$ is a symmetric, positive definite bilinear form on $\cX_v$. In particular, we can define $\norm{\mu}_v = \lp \mu,\mu\rp_v^{1/2}$ for $\mu\in \cX_v$ and by the usual arguments this defines a the vector space norm on $\cX_v$. Then the Arakelov-Zhang pairing is equal to, for $\rho,\sigma$ adelic measures defined over $K$, 
\[
 \langle \rho,\sigma\rangle = \frac{1}{2}\sum_{v\in M_K} \lp \rho_v-\sigma_v,\rho_v-\sigma_v  \rp_v = \frac{1}{2}\sum_{v\in M_K} \norm{\rho_v-\sigma_v}_v^2.
\]
(Notice that $\rho_v-\sigma_v\in \cX$.) It follows from the positive-definiteness of the norms $\norm{\cdot}_v$ that $d(\rho,\sigma)$ will vanish if and only if $\rho_v=\sigma_v$ at every place, or equivalently, if $\rho=\sigma$. Further,
\[
 d(\rho,\sigma) = \langle \rho,\sigma\rangle^{1/2} = \bigg( \frac{1}{2} \sum_{v\in M_K} \norm{\rho_v-\sigma_v}^2 \bigg)^{1/2}
\]
satisfies the triangle inequality, using the triangle inequality for $\norm{\cdot}_v$ at each place $v$ and the usual $\ell^2$-triangle inequality for the entire sum; specifically, suppose $\rho,\sigma,\tau$ are all adelic measures over $K$, then we have
\begin{align*}
 d(\rho,\tau) &= \bigg( \frac{1}{2}\sum_{v\in M_K} \norm{\rho_v-\sigma_v - (\tau_v-\sigma_v)}^2 \bigg)^{1/2}\\
&\leq \bigg( \frac{1}{2}\sum_{v\in M_K} (\norm{\rho_v-\sigma_v}_v + \norm{\sigma_v-\tau_v}_v)^2 \bigg)^{1/2}\\
&\leq \bigg( \frac{1}{2}\sum_{v\in M_K} \norm{\rho_v-\sigma_v}_v^2\bigg)^{1/2} + \bigg( \frac{1}{2}\sum_{v\in M_K}  \norm{\sigma_v-\tau_v}_v^2 \bigg)^{1/2}\\
&= d(\rho,\sigma)+d(\sigma,\tau).
\end{align*}
We note in passing that all of the above sums are in fact finite, as by definition $\sigma_v = \rho_v=\tau_v=\lambda_v$ is the standard measure at all but finitely many places.
\end{proof}
\begin{rmk}\label{rmk:local-triangle-ineq}
 We note that if we fix place $v$ of a number field $K$ and define a \emph{local} mutual energy $d(\rho_v,\sigma_v) = \langle \rho_v, \sigma_v\rangle^{1/2}$, then the proof in the above theorem also shows that $d$ is a metric on the local space of adelic metrics $\cX_v$.
\end{rmk}

\begin{proof}[Proof of Theorem \ref{thm:2}]
%Notice that $\rho$ being a global adelic measure is equivalent to the condition that $Z(\rho)$ is infinite.
Since the mutual energy pairing is nondegenerate, (1) is equivalent to (2), and (3) follows immediately from (2). Clearly (3) $\Rightarrow$ (4), and (4) $\Rightarrow$ (5) since we assumed that $Z(\rho)\cup Z(\sigma)$ was infinite, and (5) $\Rightarrow$ (6) follows from the essential nonnegativity of the adelic height \cite[Thm. 6]{FRL}. It remains to show that (6) now implies (1), but it follows immediately by the equidistribution theorem of \cite{FRL} that if there is a sequence of algebraic numbers $\al_n$ which is simultaneously small for $\rho$ and $\sigma$, then at each place $v$ we have weak convergence $[\al_n]\ra \rho_v$ and $[\al_n]\ra \sigma_v$, but this implies that $\rho_v,\sigma_v$ define the same linear functionals on $C(\sP^1(\bC_v))$, and hence they are the same measures.
\end{proof}
\begin{rmk}\label{rmk:Zfinite}
 The assumption that $\rho$ or $\sigma$ is a global adelic measure, that is, that $Z(\rho)\cup Z(\sigma)$ is infinite, is essential to second part of this result. Dynamical heights, arising from iteration of rational maps, are always nonnegative and have an infinite set of preperiodic points. Both of these properties may fail for more general adelic measures, although adelic heights are always essentially nonnegative in the sense that
 \[
  \{ \al\in \bP^1(\overline{K}) : h_\rho(\al)<-\ep<0 \}
 \]
 is finite for any fixed $\ep>0$. But more important, as noted too in \cite[Rmk. 2.11]{BakerDeMarco}, it may happen that $Z(\rho)$ is finite; choose for example a compact Berkovich adelic set $\bE$ which avoids infinity of logarithmic capacity $\gamma_\infty(\bE)<1$, then one can show by the Fekete-Szeg\H o theorem \cite{BakerRumelyBook} that in fact $Z(\rho)$ must be finite.
 
 Further, while (1) is always equivalent to $\rho=\sigma$ for adelic measures, we cannot even assume that $\rho=\sigma$ and $h_\rho=h_\sigma$ are equivalent when $Z(\rho)\cup Z(\sigma)$ is finite. To see this, suppose that we took $\rho_p=\sigma_p=\lambda_p$ for all finite rational primes $p$, but $\rho_\infty$ to be the equilibrium measure of the line segment $\{ x + i\in\bC : -2\leq x\leq 2\}$ and $\sigma_\infty$ that of $\{ x - i\in\bC : -2\leq x\leq 2\}$. Then since we define the height $h_\rho(\al)$ for $\al\in\bP^1(\overline K)$ as an average over the Galois conjugates of $\al$, $h_\rho=h_\sigma$ even though $\rho\neq \sigma$. However, as we noted, exceptions like this can only occur when $Z(\rho)\cup Z(\sigma)$ is finite, which in this example is true because it is impossible to have a sequence of algebraic numbers equidistributing along either set in $\bC$ due to the failure of the measures to be stable under complex conjugation.
 \end{rmk}

\section{Application to the unlikely intersection problem}\label{sec:application}
Recall that by $S_{0,1}$ we denote the set of parameters $c\in \bC$ such that $z=0,1$ are both preperiodic under iteration of $f_c(z)=z^2+c$. In this section we prove Theorem \ref{thm:degree-bound}% and Corollary \ref{cor:S-0-1}
. In order to prove these results, we will need to establish upper and lower bounds on the mutual energy of the equilibrium measures of two sets $M_0$ and $M_1$.

In this section we will only be performing our analysis at the archimedean prime, so we will write $(\cdot, \cdot)$ for $(\cdot,\cdot)_\infty$ for the archimedean energy pairing throughout this section. We will also set as our notation:
\[
 d_\infty(\rho,\sigma) = (\rho-\sigma, \rho-\sigma)^{1/2}
\]
dropping the factor of $1/2$ from the definition of the metric above for convenience. Notice that $d_\infty$ again defines a metric when applied to the space of Borel probability measures on $\bC$ that admit a continuous potential. Lastly, we note that for the adelic measures $\boldsymbol{\mu}_0$ associated to the adelic Mandelbrot set $\mathbb{M}_0$  and $\boldsymbol{\mu}_1$ associated to the adelic Mandelbrot set $\mathbb{M}_1$, that 
\[
 d_\infty(\mu_0,\mu_1) = \frac{1}{\sqrt{2}} d(\boldsymbol{\mu}_0,\boldsymbol{\mu}_1),
\] 
where $\mu_0,\mu_1$ are the archimedean components of $\boldsymbol{\mu}_0,\boldsymbol{\mu}_1$ respectively. 

\subsection{Upper bound on the mutual energy of $M_0$ and $M_1$}\label{sec:mu0-mu1-ub}
In this section, we will prove an upper bound (Proposition \ref{prop:main-ub}) on the mutual energy of the equilibrium measures of $M_0$ and $M_1$ based on the highest possible degree of a parameter $c$ for which $0$ and $1$ are both preperiodic for $f_c(z) = z^2 + c$. We begin with some preliminary lemmas.
\begin{lemma}\label{lemma:loewner}
 Let $K\subset\bC$ be a compact set of capacity of $1$ with connected complement $\Omega = \overline\bC\setminus K$. Then the Green's function $g(z)=g_\Omega(z,\infty)$ with respect to infinity satisfies:
 \[
  g(z)\leq \dist(z,K)^{1/2}\quad\text{for}\quad z\in \bC\setminus K,
 \]
 where $\dist$ denotes the usual Euclidean distance in $\bC$.
\end{lemma}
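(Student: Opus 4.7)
The plan is to reduce the problem to an extremal case via the exterior conformal uniformization of $\Omega$.

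Since $\Omega$ is simply connected (being the complement of a continuum containing $\infty$) and $K = \overline{\bC} \setminus \Omega$ has capacity $1$, the exterior Riemann mapping theorem yields a conformal map $\phi : \Omega \to \{|w| > 1\}$ with $\phi(\infty) = \infty$ and $\phi'(\infty) := \lim_{z \to \infty} \phi(z)/z = 1$; the derivative normalization is precisely the condition $\mathrm{cap}(K) = 1$, and the Green's function is then $g(z) = \log|\phi(z)|$. Fixing $z_0 \in \Omega$ and writing $r = \dist(z_0, K)$, the claimed bound $g(z_0) \leq \sqrt{r}$ becomes $\dist(z_0, K) \geq (\log|\phi(z_0)|)^2$.

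My strategy is to show that among all compact sets $K'$ of capacity $1$ with connected complement subject to the constraint $\dist(z_0, K') = r$, the Green's function value $g_{K'}(z_0)$ is maximized when $K'$ is a line segment of length $4$ with nearest endpoint to $z_0$ at distance $r$ and oriented away from $z_0$. This extremality is essentially the content of L\"owner's result in \cite{Lowner}; I would establish it via circular symmetrization, which weakly decreases capacity while preserving the closest-point distance to $z_0$, combined with the extremal property of a segment among circularly symmetric sets. Once extremality is granted, the problem reduces to an explicit calculation. By translation and rotation take $z_0 = 0$ and $K = [-r-4,\,-r]$; the shift $\tilde z = z + r + 2$ brings $K$ to the standard interval $[-2,2]$, whose exterior Riemann map is the Joukowski-type map $\tilde z \mapsto (\tilde z + \sqrt{\tilde z^2 - 4})/2$. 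Evaluating at $z = 0$ gives
\[
 g_K(0) = 2\log v, \qquad v = \frac{\sqrt r + \sqrt{r+4}}{2},
\]
and setting $u = \sqrt r$ the inequality $2\log v \leq u$ follows from $\frac{d}{du}(2\log v) = 2/\sqrt{u^2+4} \leq 1 = \frac{d}{du}(u)$ together with equality at $u = 0$.

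The main obstacle is the extremality step: one must carefully justify that the supremum of $g_{K'}(z_0)$ over capacity-$1$ sets $K'$ with prescribed $\dist(z_0, K') = r$ is actually attained at a line segment, requiring attention to how symmetrization interacts with the simultaneous constraints on capacity and closest-point distance. An alternative route bypasses this extremal problem entirely via a direct H\"older-$1/2$ distortion estimate on the inverse conformal map $\psi = \phi^{-1}$, obtained from coefficient inequalities coming from the Gronwall area theorem $\sum_{n\geq 1} n|a_n|^2 \leq 1$ for the Laurent coefficients of $\psi$ on $\{|w|>1\}$.
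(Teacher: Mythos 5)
Your argument is in substance the same as the paper's: both rest on L\"owner's distance estimate for the exterior conformal map, and your closing computation is the paper's closing inequality in disguise. The paper quotes L\"owner's theorem in the form $\dist(L_u,K)\geq \frac{u^2}{1+u}\operatorname{cap}(K)$ for the level curve $L_u=\{\,|\phi(z)|=1+u\,\}$; with $\operatorname{cap}(K)=1$ this gives $\dist(z,K)\geq\bigl(|\phi(z)|^{1/2}-|\phi(z)|^{-1/2}\bigr)^2=\bigl(2\sinh(g(z)/2)\bigr)^2$, and the lemma follows from $g\leq 2\sinh(g/2)$. Your reduction to the radially oriented segment of length $4$ is precisely the equality case of L\"owner's inequality, your Joukowski computation is correct, and your final step ($2\log v\leq u$ where $v-1/v=u=\sqrt r$) is the same inequality $g\leq 2\sinh(g/2)$. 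The one thing you have not proved is the extremality of the segment, which you correctly flag as the main obstacle: making the circular-symmetrization argument rigorous under the simultaneous constraints on capacity and nearest-point distance is nontrivial, and since that statement \emph{is} L\"owner's theorem, your main route as written buys nothing over simply citing \cite{Lowner}, which is what the paper does. Your alternative route is the standard proof of that theorem and is the one to write out if you want a self-contained argument: the area theorem yields the distortion bound $|\psi'(w)|\geq(|w|^2-1)/|w|^2$ for the inverse map $\psi=\phi^{-1}$ in the class $\Sigma$, and integrating along the $\psi$-preimage of the straight segment from $z$ to its nearest point of $K$ gives $\dist(z,K)\geq\int_1^{1+u}(\rho^2-1)\rho^{-2}\,d\rho=u^2/(1+u)$. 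Finally, note that both your proof and the paper's implicitly require $K$ to be connected (so that $\Omega$ is simply connected and the exterior Riemann map exists); this is not literally in the hypotheses but holds for $M_0$ and $M_1$, where the lemma is applied.
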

\begin{proof}
 Our proof relies on a result of L\"owner \cite{Lowner}, which states that for a continuum $K$ with connected complement $\Omega = \overline{\bC}\setminus K$, if $\phi : \Omega \ra \Delta = \{ w\in\overline \bC : \abs{w} > 1 \}$ is the standard conformal map satisfying $\phi(\infty)=\infty$ and $\phi'(\infty)>0$, then for all $u>0$,
 \[
  \dist(L_u,K) \geq \frac{u^2}{1+u} \ON{cap}(K),
 \]
 where $L_u = \{z\in\bC : \abs{\phi(z)}=1+u\}$. In particular, it follows in the case of $\ON{cap}(K)=1$ that for $z\in\Omega$ and $t\in \p \Omega = \p K$, we have that
 \[
  \abs{z-t} \geq \frac{(\abs{\phi(z)} - 1)^2}{\abs{\phi(z)}}
 \]
 or
 \[
  \abs{\phi(z)} - 1 \leq \abs{\phi(z)}^{1/2}\abs{z-t}^{1/2}.
 \]
 As the Green's function $g(z) = \log\, \abs{\phi(z)}\geq 0$, this yields
 \[
  g(z) \leq e^{g(z)/2} - e^{-g(z)/2} \leq \dist(z,K)^{1/2},
 \]
 which gives the desired result.
\end{proof}
We now prove an auxiliary lemma which will help us bound the error involved in approximating our discrete measures by appropriately regularized measures (compare \cite[Lemma 2.9]{FRL}):
\begin{lemma}\label{lemma:modulus-of-cont}
 If $c$ is an algebraic number such that $h_{\bM_0}(c)=0$, then $(\mu_0,[c])=0$ and
 $
  0\leq -(\mu_0, [c]_\ep) \leq \sqrt{\ep}.
 $
 Likewise, if $c$ is an algebraic number such that $h_{\bM_1}(c)=0$, then $(\mu_1,[c])=0$ and 
 $
  0\leq -(\mu_1, [c]_\ep) \leq \sqrt{\ep}.
 $
\end{lemma}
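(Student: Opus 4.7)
The plan is to reduce both statements to properties of the classical Green's function $g(z) = g_{\overline{\bC}\setminus M_0}(z,\infty)$ via the identity
\[
\int_{\bC} -\log|z-w|\, d\mu_0(w) = -g(z) \qquad \text{for all } z \in \bC.
\]
Outside $M_0$ this is the standard relation between the equilibrium measure and the Green's function, with vanishing Robin constant since $\operatorname{cap}(M_0) = 1$; on $M_0$ it requires that $g$ extends continuously by zero, which follows from the H\"older-$\tfrac{1}{2}$ estimate of Lemma \ref{lemma:loewner}. This is the only analytic input needed.

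For the first equality, I would observe that the non-archimedean components of $\bM_0$ are the standard measures on the Berkovich unit disk, so their local height contributions are $\log^+|\cdot|_v$. Hence $h_{\bM_0}(c) = 0$ forces $c$ to be an algebraic integer and yields
\[
h_{\bM_0}(c) = \frac{1}{d}\sum_{c' \in G_\bQ c} g(c') = 0,
\]
where $d = [\bQ(c):\bQ]$. Since $g \geq 0$, every Galois conjugate $c'$ lies in $M_0$, and the key identity gives $(\mu_0, [c]) = \frac{1}{d}\sum_{c'}(-g(c')) = 0$.

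For the regularized inequality, Fubini together with the mean value property of harmonic functions produces
\[
(\mu_0, [c]_\ep) = \frac{1}{d}\sum_{c'} \frac{1}{2\pi}\int_0^{2\pi} \bigl(-g(c' + \ep e^{i\theta})\bigr)\, d\theta.
\]
Nonnegativity of $g$ immediately yields $-(\mu_0, [c]_\ep) \geq 0$. For the upper bound, any point $z$ on the circle $|z-c'| = \ep$ satisfies $\dist(z, M_0) \leq |z - c'| = \ep$ since $c' \in M_0$, so Lemma \ref{lemma:loewner} gives $g(z) \leq \sqrt{\ep}$. Averaging over each circle and summing over the Galois orbit bounds $-(\mu_0, [c]_\ep)$ above by $\sqrt{\ep}$. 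The argument for $\mu_1$ and $M_1$ is identical, using the Green's function of $\overline{\bC} \setminus M_1$. The proof is almost mechanical once Lemma \ref{lemma:loewner} is in hand --- the H\"older exponent $\tfrac{1}{2}$ in L\"owner's estimate is precisely what yields the $\sqrt{\ep}$; the only mildly subtle point is verifying pointwise (rather than merely quasi-everywhere) vanishing of the potential on $M_0$, which again is handled by the continuity Lemma \ref{lemma:loewner} affords.
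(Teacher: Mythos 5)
Your proposal is correct and follows essentially the same route as the paper: write $(\mu_0,[c]_\ep)$ as the integral of $-g_{M_0}(\cdot,\infty)$ against $[c]_\ep$, use $g\geq 0$ for one inequality and Lemma \ref{lemma:loewner} together with $\dist(\supp [c]_\ep, M_0)\leq\ep$ for the other. The only (harmless) difference is that you derive $G_\bQ c\subset M_0$ directly from the local decomposition of $h_{\bM_0}$ rather than from the equivalence with preperiodicity of $0$ under $f_c$, and you are slightly more explicit about the continuity of $g$ up to $\partial M_0$.
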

\begin{proof}
 We will prove the case for $\bM_0$, the case for $\bM_1$ being identical, \emph{mutatis mutandis}. The canonical height vanishing is equivalent to $z=0$ being preperiodic for $f_c(z) = z^2 + c$. In particular, it follows that $c$ is an algebraic integer, that $G_\bQ c\subset M_0$, and therefore 
 \[
  (\mu_0,[c]) = \int -\log \abs{x-y}\,d\mu_0(x)\,d[c](y) = \int -g(y)\,d[c](y) = 0,
 \]
 where $g(z)=g_{M_0}(z,\infty) = \int \log\abs{z-w}\,d\mu_0(w)$ denotes the Green's function of $M_0$ with respect to infinity. Further, as the support of $[c]_\ep$ is never more than a distance of $\ep$ from $M_0$, and as $M_0$ (and $M_1$) are known to be compact sets of capacity $1$ with connected complements, it follows from Lemma \ref{lemma:loewner} that 
 \[
 (\mu_0,[c]_\ep) = \int -g(y)\,d[c]_{\ep}(y)
 \]
 satisfies $0 \geq (\mu_0,[c])\geq -\sqrt{\ep}$, which proves the desired inequality.
 \end{proof}
\begin{lemma}\label{lemma:arch-2}
 Let $F\subset \bC$ be a finite set and $\ep>0$. Then 
\[
 ([F]_\ep , [F]_\ep) \leq ([F],[F]) - \frac{\log \ep}{\abs{F}}.
\]
\end{lemma}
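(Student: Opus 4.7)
The plan is to expand both sides as discrete double sums over $F$ and reduce the inequality to a pointwise estimate on the kernel values
\[
 e(x,y) := \iint -\log|u-v|\,d\delta_{x,\ep}(u)\,d\delta_{y,\ep}(v),\qquad x,y\in F.
\]
By definition $([F]_\ep,[F]_\ep) = |F|^{-2}\sum_{x,y\in F} e(x,y)$, while $([F],[F]) = |F|^{-2}\sum_{x\neq y} -\log|x-y|$, since the (classical) diagonal is excluded from the mutual energy of discrete measures. Thus the whole inequality reduces to a term-by-term bound on $e(x,y)$.

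The key step is to establish that $e(x,x) = -\log\ep$ and that $e(x,y) \leq -\log|x-y|$ whenever $x\neq y$. Both will follow from the classical averaging identity
\[
 \int \log|u-y|\,d\delta_{x,\ep}(u) \;=\; \log\max(|x-y|,\ep),
\]
i.e.\ the mean value of $\log|\cdot-y|$ over the circle of radius $\ep$ about $x$ (standard, since $\log|u-y|$ is harmonic in $u$ off $y$, and the integral equals $\log \ep$ when $y$ lies in the closed disc about $x$, by the mean value property applied on any annulus about $y$). Applying Fubini to $e(x,y)$, the inner integral in $v$ yields the potential $u\mapsto -\log\max(|u-y|,\ep)$, which is pointwise $\leq -\log|u-y|$; integrating in $u$ then gives $e(x,y) \leq -\log\max(|x-y|,\ep)$. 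For $x\neq y$ this equals $-\log|x-y|$ if $|x-y|\geq\ep$ and equals $-\log\ep < -\log|x-y|$ if $|x-y|<\ep$, so in either case $e(x,y) \leq -\log|x-y|$; for $x=y$ the same identity gives $e(x,x) = -\log\ep$ exactly.

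Summing the pointwise bounds then yields the lemma:
\[
 ([F]_\ep,[F]_\ep) \;\leq\; \frac{1}{|F|^2}\Bigl(\sum_{x\neq y}(-\log|x-y|) + \sum_{x\in F}(-\log\ep)\Bigr) \;=\; ([F],[F]) - \frac{\log\ep}{|F|},
\]
since the diagonal contributes exactly $|F|$ copies of $-\log\ep$. There is no substantive obstacle in this argument; the only point requiring minor care is the averaging identity when $|x-y|\leq \ep$, where one invokes the integrability of the logarithmic singularity along the circle and harmonic function theory on the punctured disc, both of which are standard.
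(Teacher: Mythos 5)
Your proposal is correct and follows essentially the same route as the paper: both reduce the claim to the pointwise bounds $e(x,y)\leq -\log|x-y|$ for $x\neq y$ and $e(x,x)=-\log\ep$ via the circle-averaging identity $\int\log|u-y|\,d\delta_{x,\ep}(u)=\log\max(|x-y|,\ep)$, and then sum. The only cosmetic difference is that you drop the $\max$ pointwise before the second integration, whereas the paper integrates first and then uses $\int\max\geq\max\int$; these are the same estimate.
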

\noindent This lemma improves on \cite[Lemma 2.10]{FRL} as the term $C/\abs{F}$ on the right hand side is removed.
\begin{proof}
 We follow the same method as in the proof of \cite[Lemma 2.10]{FRL}. We note that for $\ep>0$ and two points $z\neq z'\in\bC$,
\begin{align*}
 -(\delta_{z,\ep},\delta_{z',\ep}) &= \int_0^1\int_0^1 \log\abs{z + \ep\cdot e^{2\pi it} - (z' + \ep\cdot e^{2\pi is})}\,dt\,ds\\
 & = \int_0^1\max\{ \log\abs{z- (z' + \ep\cdot e^{2\pi is})}, \log \ep\}\,ds\\
 &\geq \max\left\{  \int_0^1\log\abs{z- (z' + \ep\cdot e^{2\pi is})}\,ds, \log \ep\right\}\\
 &\geq \max\{\log\abs{z-z'},\log \ep\}\geq \log\abs{z-z'} = -(\delta_z,\delta_{z'})
\end{align*}
so for each $z\neq z'$, we have $(\delta_z,\delta_{z'})\geq (\delta_{z,\ep},\delta_{z',\ep})$. On the other hand, we have what is essentially the Robin constant of the disc of radius $\ep$:
\[
 (\delta_{z,\ep},\delta_{z,\ep}) = - \log \ep.
\]
Thus
\begin{align*}
 ([F]_\ep, [F]_\ep) &= \frac{1}{\abs{F}^2} \sum_{\substack{ z,z'\in F\\ z\neq z'}} (\delta_{z,\ep},\delta_{z',\ep}) + \frac{1}{\abs{F}^2} \sum_{z\in F} (\delta_{z,\ep}, \delta_{z,\ep}) \\
 &\leq \frac{1}{\abs{F}^2} \sum_{\substack{ z,z'\in F\\ z\neq z'}} (\delta_{z},\delta_{z'}) + \frac{1}{\abs{F}^2}\cdot \abs{F} \cdot(-\log \ep)\\
 &= ([F],[F]) - \frac{\log \ep}{\abs{F}}.\qedhere
\end{align*}
\end{proof}

\begin{prop}\label{prop:main-ub}
 Let $c\in\Qbar$ be an algebraic integer of degree $d = [\bQ(c):\bQ]$ and $\ep>0$ be fixed. Suppose that $0,1$ are both preperiodic for $f_c(z)=z^2+c$. Then
 \begin{equation}\label{eqn:mu0-mu1-triangle-ineq}
  d_\infty(\mu_0,\mu_1) \leq 2\left( -\frac{1}{d^2} \log \frac{d^d}{d!} \lfrac{\pi}{4}^{d/2} + 2\sqrt{\ep} + \frac{\log 1/\ep}{d} \right)^{1/2}.
 \end{equation}
\end{prop}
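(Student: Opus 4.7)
The plan is to apply the triangle inequality for the local metric $d_\infty$ (valid by Remark \ref{rmk:local-triangle-ineq}), inserting the regularized probability measure $[c]_\ep$ between $\mu_0$ and $\mu_1$:
\[
 d_\infty(\mu_0,\mu_1) \leq d_\infty(\mu_0,[c]_\ep) + d_\infty([c]_\ep,\mu_1),
\]
and then to bound each of $d_\infty(\mu_i,[c]_\ep)^2$ for $i=0,1$ by the same quantity. The hypothesis that $z=0,1$ are both preperiodic for $f_c$ is equivalent to $h_{\bM_0}(c)=h_{\bM_1}(c)=0$, so every Galois conjugate of $c$ lies simultaneously in $M_0$ and $M_1$, and $c$ is an algebraic integer.

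Expanding $d_\infty(\mu_i,[c]_\ep)^2$ by bilinearity produces three terms. The self-energy $(\mu_i,\mu_i)$ vanishes since $M_i$ has logarithmic capacity $1$ and hence Robin constant zero. The cross term $-2(\mu_i,[c]_\ep)$ is controlled by Lemma \ref{lemma:modulus-of-cont} (which itself rests on the L\"owner-type estimate of Lemma \ref{lemma:loewner}), giving $-2(\mu_i,[c]_\ep)\leq 2\sqrt{\ep}$. Finally, the self-energy of the mollified measure is handled by Lemma \ref{lemma:arch-2}:
\[
 ([c]_\ep,[c]_\ep) \leq ([c],[c]) + \frac{\log(1/\ep)}{d}.
\]

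It remains to bound $([c],[c])=-\frac{1}{d^2}\log|\disc(c)|$ from above, that is, to produce a lower bound on the absolute value of the discriminant of the minimal polynomial of $c$. Since $c$ is an algebraic integer, $\disc(c)$ is a nonzero rational integer divisible by the field discriminant of $\bQ(c)$, and applying a Minkowski-style lower bound on $|d_{\bQ(c)}|$ uniformly across signatures contributes precisely the term $-\frac{1}{d^2}\log\frac{d^d}{d!}\lfrac{\pi}{4}^{d/2}$ appearing in \eqref{eqn:mu0-mu1-triangle-ineq}. Combining the three contributions gives a bound on $d_\infty(\mu_i,[c]_\ep)$ which is independent of $i$, so the triangle inequality then produces the factor of $2$ outside in the final bound.

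The step most likely to require care is the bookkeeping of the discriminant estimate: one has to track the constants through Minkowski's inequality in its worst-case-signature form, use that $|\disc(c)|\geq|d_{\bQ(c)}|$ only strengthens the bound, and apply the identity $([c],[c])=-\frac{1}{d^2}\log|\disc(c)|$ correctly to the diagonal-excluded energy pairing. Once this arithmetic input is in place, the rest of the proof is a direct assembly of the three lemmas together with the triangle inequality, and requires no further optimization in $\ep$ (the resulting freedom in $\ep$ is left to the application in Section \ref{sec:application}).
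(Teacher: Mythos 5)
Your proposal is correct and follows essentially the same route as the paper: triangle inequality through $[c]_\ep$, vanishing of $(\mu_i,\mu_i)$ by the capacity-one property, Lemmas \ref{lemma:modulus-of-cont} and \ref{lemma:arch-2} for the cross and regularized self-energy terms, and the Minkowski discriminant bound for $([c],[c])$. No gaps; the discriminant bookkeeping you flag is handled exactly as you describe.
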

\begin{proof}
 As $[c]_\ep$ admits a continuous potential, we can apply the triangle inequality to obtain:
\[
 d_\infty(\mu_0,\mu_1) \leq d_\infty(\mu_0,[c]_\ep) + d_\infty(\mu_1,[c]_\ep)
\]
Now
\begin{align*}
 d_\infty(\mu_0,[c]_\ep) &= (\mu_0 - [c]_\ep, \mu_0 - [c]_\ep)^{1/2}\\
 &= \left[ (\mu_0,\mu_0) - 2 (\mu_0,[c]_\ep) + ([c]_\ep,[c]_\ep)\right]^{1/2}\\
 &\leq \left[-2(\mu_0,[c]_\ep) + ([c],[c]) + \frac{\log 1/\ep}{d}\right]^{1/2}
\end{align*}
where we have applied Lemma \ref{lemma:arch-2}, and used the fact that 
\[
 (\mu_0,\mu_0) = -\log \ON{cap}(M_0) = 0.
\]
As $c$ is an algebraic integer of degree $d=[\bQ(c):\bQ]$, it is well known that its discriminant is a rational integer which is at least as large as
$
 \frac{d^d}{d!} \lfrac{\pi}{4}^{d/2}
$
and thus 
\[
([c],[c])=\frac{1}{d^2}\sum_{1\leq i\neq j\leq d} -\logabs{c_i-c_j }\leq -\frac{1}{d^2} \log \frac{d^d}{d!} \lfrac{\pi}{4}^{d/2}.
\]
By applying Lemma \ref{lemma:modulus-of-cont} we now obtain:
\begin{equation}\label{eqn:mu0-c-lb}
 d_\infty(\mu_0,[c]_\ep) \leq \left(-\frac{1}{d^2} \log \frac{d^d}{d!} \lfrac{\pi}{4}^{d/2}+ 2\sqrt\ep + \frac{\log 1/\ep}{d}\right)^{1/2}
\end{equation}
The same argument applies for $\mu_1$ and yields the desired result.
\end{proof}

\subsection{Lower bound on the mutual energy of $M_0$ and $M_1$}\label{sec:mu0-mu1-lb}
It now remains to find a lower bound for the distance $d_\infty(\mu_0,\mu_1)$. Our result is the following:
\begin{prop}\label{prop:main-lb}
 The mutual energy distance of the equilibrium measures of the Mandelbrot sets $M_0$ and $M_1$ satisfies $d_\infty(\mu_0,\mu_1)\geq 0.623482\ldots$
\end{prop}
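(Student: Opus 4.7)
The plan is to apply the reverse triangle inequality for $d_\infty$ to a carefully chosen algebraic parameter $c$ that is preperiodic for $f_c$ when started at $1$ but not when started at $0$---that is, an algebraic integer $c$ satisfying $f_c^n(1)=f_c^m(1)$ for some explicit $0\le m<n$, so that $h_{\bM_1}(c)=0$ and every Galois conjugate of $c$ lies in $M_1$, while $h_{\bM_0}(c)>0$. Regularising $[c]$ to $[c]_\epsilon$ via Section~\ref{sec:meas-reg}, the reverse triangle inequality gives
\[
d_\infty(\mu_0,\mu_1) \ge d_\infty(\mu_0,[c]_\epsilon) - d_\infty(\mu_1,[c]_\epsilon).
\]

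The upper bound on $d_\infty(\mu_1,[c]_\epsilon)$ is precisely the argument of Proposition~\ref{prop:main-ub}, only now applied at $\bM_1$: Lemma~\ref{lemma:modulus-of-cont} yields $-(\mu_1,[c]_\epsilon)\le\sqrt{\epsilon}$, while Lemma~\ref{lemma:arch-2} bounds the self-energy (in fact, for $\epsilon$ smaller than half the minimal separation between the conjugates of $c$, Jensen's formula applied to the regularising circles gives the exact identity $([c]_\epsilon,[c]_\epsilon)=([c],[c])+\tfrac{\log(1/\epsilon)}{d}$). The crucial new ingredient is a lower bound on $d_\infty(\mu_0,[c]_\epsilon)$, which I would derive from the identity
\[
-(\mu_0,[c])_\infty = h_{\bM_0}(c),
\]
valid for every algebraic integer $c$ because the non-archimedean local heights for $\bM_0$ vanish on the ring of integers (combined with the product formula $\sum_v ([c],[c])_v=0$ from equation~\eqref{eqn:disc}). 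Subharmonicity of the Green's function $g_{M_0}$ on $\mathbb{C}$ then gives $-(\mu_0,[c]_\epsilon)\ge -(\mu_0,[c])=h_{\bM_0}(c)$, and inserting the self-energy computation yields $d_\infty(\mu_0,[c]_\epsilon)^2 \ge 2h_{\bM_0}(c)+([c],[c])+\tfrac{\log(1/\epsilon)}{d}$.

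The canonical height $h_{\bM_0}(c)=\tfrac{1}{d}\sum_\sigma g_{M_0}(\sigma c)=\tfrac{1}{d}\sum_\sigma G_{\sigma c}(0)$ is effectively computable from the dynamical escape rate $G_c(0)=\lim_{k\to\infty} 2^{-k}\log^+|f_c^k(0)|$: once $|f_c^k(0)|$ exceeds a fixed escape radius the tail is controlled by a geometric series, so finitely many iterations suffice for any required precision. Assembling the pieces produces the explicit lower bound
\[
d_\infty(\mu_0,\mu_1) \ge \sqrt{2h_{\bM_0}(c)+A}-\sqrt{2\sqrt{\epsilon}+A}, \qquad A=([c],[c])+\tfrac{\log(1/\epsilon)}{d},
\]
which is then optimised over $\epsilon>0$ (subject to the minimal-gap constraint on the conjugates of $c$) and over the choice of preperiodic parameter $c$. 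The main obstacle is the numerical search itself: the bound is strongest when the Galois conjugates of $c$ equidistribute well enough along $\mu_1$ to keep $([c],[c])$ near zero, yet lie far enough outside $M_0$ to make $h_{\bM_0}(c)$ large, and the two requirements compete. The smallest-degree preperiodic parameters (such as $c=(-3\pm\sqrt{5})/2$ from $f_c^2(1)=0$) yield only crude bounds, so attaining the specific constant $0.623482\ldots$ will likely require a parameter of moderate degree coming from a higher iterate $f_c^n(1)=0$ (or a more general $f_c^n(1)=f_c^m(1)$ relation), with the discriminant and iterated escape-rate sums computed explicitly from the defining polynomial.
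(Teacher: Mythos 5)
Your overall strategy is a genuinely different route from the paper's, and its skeleton is sound: the identity $-(\mu_0,[c])_\infty=h_{\bM_0}(c)$ for algebraic integers, the sub-mean-value inequality for the (subharmonic extension of the) Green's function giving $-(\mu_0,[c]_\ep)\geq h_{\bM_0}(c)$, and the exact regularized self-energy $([c]_\ep,[c]_\ep)=([c],[c])+\frac{\log(1/\ep)}{d}$ for $\ep$ below half the minimal gap are all correct. The paper instead uses \emph{two} parameters, $\al$ with $f_\al^{11}(1)=1$ and $\beta$ with $f_\beta^{11}(0)=0$ (each of degree $1023$), writes $d_\infty(\mu_0,\mu_1)\geq d_\infty([\al]_\ep,[\beta]_\ep)-d_\infty(\mu_0,[\al]_\ep)-d_\infty(\mu_1,[\beta]_\ep)$, and lower-bounds the purely discrete term by explicitly summing $\log\abs{\al_i-\beta_j}$ over all pairs of conjugates; no escape-rate computation is needed. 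There is a quantitative reason to prefer that route which you should confront: in your bound $\sqrt{2h_{\bM_0}(c)+A}-\sqrt{2\sqrt\ep+A}$, the first term is capped near $\sqrt{d_\infty(\mu_0,\mu_1)^2+A}$ because $h_{\bM_0}(c)\to\frac12 d_\infty(\mu_0,\mu_1)^2$ as the conjugates equidistribute along $\mu_1$, so your bound can exceed roughly $d_\infty(\mu_0,\mu_1)-\sqrt{A}$ only by $O(A)$, and with $A\approx\frac{\log(1/\ep)}{d}$ and $d\approx 10^3$ this loses on the order of $0.1$, landing near $0.54$ rather than $0.62$. The paper's two-parameter version partially cancels this loss because the regularization constants $+\frac{\log(1/\ep)}{d}$ appear with a \emph{positive} sign inside $d_\infty([\al]_\ep,[\beta]_\ep)^2$, inflating the main term. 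So your method would likely need parameters of substantially higher degree to certify the stated constant.

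Two further concrete gaps. First, the proposition asserts a specific numerical value, and your writeup exhibits no parameter $c$ and computes no numbers; as it stands the constant $0.623482\ldots$ is not established, only a recipe for attempting it. Second, your parenthetical example is wrong: $f_c^2(1)=0$ (i.e.\ $c^2+3c+1=0$, $c=(-3\pm\sqrt5)/2$) does \emph{not} make $z=1$ preperiodic --- the orbit of $1$ merely passes through $0$, and $1$ is preperiodic only if $0$ is itself preperiodic for that $c$, which it is not here. The correct low-degree conditions are of the form $f_c^n(1)=f_c^m(1)$ with $0\leq m<n$, as you state in the general setup; you should draw your candidate parameters only from such relations (or, following the paper, from $f_c^{n}(1)=1$).
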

In order to estimate this quantity, we will choose algebraic numbers whose Galois conjugates well-approximate the equilibrium distributions of $M_0$ and $M_1$. One can check that the equation $f_c^{11}(1)=1$ yields 1024 solutions (counting multiplicity) of $c$ for which $z=1$ is periodic; namely, $c=0$ and the 1023 roots of the irreducible polynomial
\begin{multline*}
F(c) = 2047 + 2075647 c + 1393985534 c^2 + 697735695867 c^3 + 
 277762348369394 c^4 \\ + 91636064921989590 c^5 + \cdots + +1177856 c^{1021} + 1536 c^{1022} + c^{1023}
\end{multline*}
Let $\alpha$ denote a root of $F(c)$, and for $\ep>0$ let $[\alpha]_\ep$ denote the regularized Borel probability measure on $\bC$ as defined above in Section \ref{sec:meas-reg} (this measure is the same regardless of the choice of root). Likewise, one can check that the equation $f_c^{11}(0)=0$ has solutions $c=0$ and the roots of the irreducible polynomial
\[
 G(c) = 1 + c + 2 c^2 + 5 c^3 + 14 c^4 + 42 c^5 + \cdots + 130816 c^{1021} + 512 c^{1022} + c^{1023}.
\]
Let $\beta$ be a root of $G(c)$, and let $[\beta]_\ep$ denote the regularized measure associated to it.

Applying the triangle inequality for the mutual energy metric $d$ from Theorem \ref{thm:triangle-ineq} (in fact, the triangle inequality at the archimedean place, although we can consider each measure to have the trivial measure $\lambda_p$ at finite rational primes $p$ if desired; see Remark \ref{rmk:local-triangle-ineq}) we obtain:
\begin{equation}\label{eqn:mu0-mu1-lb}
 d_\infty(\mu_0,\mu_1)\geq d_\infty([\al]_\ep,[\beta]_\ep) - d_\infty(\mu_0,[\al]_\ep) - d_\infty(\mu_1,[\beta]_\ep).
\end{equation}
We will prove Proposition \ref{prop:main-lb} by bounding the terms in this sum for our choice of $\alpha$ and $\beta$. We begin by expanding:
\begin{equation}\label{eqn:Ae-Be-lb}
d_\infty([\al]_\ep,[\beta]_\ep) = \sqrt{([\al]_\ep,[\al]_\ep)_\infty - 2([\al]_\ep,[\beta]_\ep)_\infty + ([\beta]_\ep,[\beta]_\ep)_\infty} 
\end{equation}
We let $d=[\bQ(\al):\bQ]=[\bQ(\beta):\bQ]=1023$ and choose $\ep = 1/d^2 = 1/1023^2$ throughout in the following computations.
\begin{lemma}
 For $\al$ as chosen above, $([\al]_\ep,[\al]_\ep)_\infty \geq 0.00514961\ldots$
\end{lemma}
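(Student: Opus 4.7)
The plan is to compute $([\alpha]_\epsilon,[\alpha]_\epsilon)_\infty$ essentially exactly by expanding the bilinear pairing as a double sum over the Galois orbit of $\alpha$ and reducing to the discriminant of the explicit polynomial $F$.

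First, I split the sum into its diagonal and off-diagonal contributions:
\[
([\alpha]_\epsilon,[\alpha]_\epsilon)_\infty = \frac{1}{d^2}\sum_{z\in G_\bQ\alpha}(\delta_{z,\epsilon},\delta_{z,\epsilon})_\infty + \frac{1}{d^2}\sum_{\substack{z,z'\in G_\bQ\alpha\\ z\neq z'}}(\delta_{z,\epsilon},\delta_{z',\epsilon})_\infty.
\]
Each diagonal term is the Robin constant of the disc of radius $\epsilon$, equal to $-\log\epsilon$, so the diagonal contribution totals $-\log\epsilon/d$. For the off-diagonal terms, I would observe that the chain of inequalities in the proof of Lemma \ref{lemma:arch-2} collapses to equalities whenever $|z-z'|>2\epsilon$: in that regime, the integrand $\log|z-z'-\epsilon e^{2\pi is}|$ dominates $\log\epsilon$ for every $s$, so the two max operations drop out, and the mean-value property of $\log|\cdot|$ on a circle gives $(\delta_{z,\epsilon},\delta_{z',\epsilon})_\infty=-\log|z-z'|$ exactly.

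Next, I would verify numerically that all pairs of distinct roots of $F$ are separated by more than $2\epsilon = 2/1023^2$; this is a finite check done by isolating the roots of $F$ to sufficient precision (and is consistent with any crude lower bound on the minimum separation coming from the fact that $\mathrm{disc}(F)$ is a nonzero integer). With this in hand,
\[
([\alpha]_\epsilon,[\alpha]_\epsilon)_\infty = -\frac{\log\epsilon}{d} - \frac{\log|\mathrm{disc}(F)|}{d^2}.
\]

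Finally, I would evaluate $\log|\mathrm{disc}(F)|$. Since $F$ is monic with integer coefficients, $\mathrm{disc}(F)=\pm\mathrm{Res}(F,F')$ is a specific nonzero integer, computable by computer algebra from the given coefficients; substituting $d=1023$, $\epsilon=1/d^2$, and the computed value then yields the claimed numerical bound. The main obstacle is precisely this last evaluation: the degree-$1023$ polynomial $F$ has enormous coefficients, so the direct resultant computation is a substantial task. A potentially cleaner alternative would be to invoke the product formula \eqref{eqn:disc} to write $\log|\mathrm{disc}(F)| = \sum_{p<\infty} v_p(\mathrm{disc}(F))\log p$ and exploit the dynamical origin of $F$—its roots are the nontrivial parameters $c$ for which $z=1$ has period dividing $11$ under $f_c$—to characterize and control the small set of primes of bad reduction contributing to $\mathrm{disc}(F)$.
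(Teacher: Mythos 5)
Your argument is structurally the same as the paper's: split the pairing into diagonal and off-diagonal contributions, evaluate each diagonal term as the Robin constant $-\log\ep$ of a disc of radius $\ep$, and evaluate the off-diagonal term for a $2\ep$-separated pair exactly as $-\log\,\abs{\al_i-\al_j}$ by harmonicity (your ``collapse of the max'' is the same mean-value computation). The one genuine difference is the treatment of close pairs: the paper never verifies that all conjugates are pairwise separated by more than $2\ep$; it simply retains the one-sided estimate $\int-\log\abs{x-y}\,d\delta_{\al_i,\ep}\,d\delta_{\al_j,\ep}\geq-\log(4\ep)$ for any pair with $\abs{\al_i-\al_j}\leq 2\ep$, which is enough because only a lower bound is claimed. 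Your route instead requires checking separation for all $\binom{1023}{2}$ pairs to upgrade the bound to an exact identity; that is a legitimate finite numerical check, but be aware that your parenthetical fallback does not rescue it: $\abs{\disc F}\geq 1$ only yields a minimum root separation of order $C^{-d^2}$, astronomically smaller than $2/1023^2$, so the check genuinely must be done with isolated roots. Finally, computing $\log\abs{\disc F}$ as an exact integer resultant of a degree-$1023$ polynomial is unnecessary overhead: since the roots must be computed numerically anyway, one simply sums $\log\abs{\al_i-\al_j}$ over pairs, which is what the paper does and what your closed-form expression reduces to numerically ($-\log\ep/d-\log\abs{\disc F}/d^2\approx 0.0135494-0.0083997$, matching the stated constant).
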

\begin{proof}
 Let us label the Galois conjugates of $\alpha$ as $\alpha_1,\ldots, \alpha_d$. Then
 \begin{multline*}
  ([\al]_\ep,[\al]_\ep)_\infty = \frac{1}{d^2} \sum_{1\leq i \neq j \leq d} \int -\log\,\abs{x-y}\,d\delta_{\al_i,\ep}(x)\,d\delta_{\al_j,\ep}(y)\\
  + \frac{1}{d^2} \sum_{1\leq i \leq d}  \int -\log\,\abs{x-y}\,d\delta_{\al_i,\ep}(x)\,d\delta_{\al_i,\ep}(y)
 \end{multline*}
where we recall from the definitions above that $\delta_{\al_i,\ep}$ is the normalized Lebesgue measure of the circle $\{\abs{z-\al_i} = \ep\}$, which we recognize as the equilibrium measure of the disc $D(\alpha_i,\ep)$. As is well-known, the terms in the second sum are equal to negation of the logarithmic capacity (with respect to $\infty$) of the discs:
\[
 \int -\log\,\abs{x-y}\,d\delta_{\al_i,\ep}(x)\,d\delta_{\al_i,\ep}(y) = -\log \ep\quad\text{for each}\quad 1\leq i\leq d.
\]
Now we examine the terms in the first sum, that is, for which $i\neq j$. If $\abs{\alpha_i - \alpha_j} > 2\ep$, then the discs $D(\al_i,\ep)$ and $D(\al_j,\ep)$ are disjoint, and by the harmonicity of the potential function outside the disc,
\[
 \int -\log\,\abs{x-y}\,d\delta_{\al_i,\ep}(x)\,d\delta_{\al_j,\ep}(y) = -\log\,\abs{\al_i-\al_j}
\]
for such terms. On the other hand, if $i\neq j$ but $\abs{\alpha_i - \alpha_j} \leq 2\ep$, then $\abs{x-y}\leq 4\ep$ for all $x\in D(\al_i,\ep)$ and $y\in D(\al_j,\ep)$, and thus $-\log\,\abs{x-y}\geq -\log\,(4\ep)$, so
\[
 \int -\log\,\abs{x-y}\,d\delta_{\al_i,\ep}(x)\,d\delta_{\al_j,\ep}(y)\geq -\log\,(4\ep)
\]
in this case. Computing the sums above with these estimates for our chosen $\al$ results in the desired bound.
\end{proof}
Repeating the same estimates above for the conjugates of $\beta$ now yields:
\begin{lemma}
 For $\beta$ as chosen above, $([\beta]_\ep,[\beta]_\ep)_\infty \geq 0.00677490\ldots$
\end{lemma}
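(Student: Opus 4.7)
The plan is to imitate the proof of the previous lemma verbatim, since the only change is replacing the polynomial $F(c)$ by $G(c)$ and the conjugates of $\alpha$ by those of $\beta$. Let $\beta_1,\dots,\beta_d$ denote the Galois conjugates of $\beta$ over $\bQ$, where $d=1023$, and split the double sum defining $([\beta]_\ep,[\beta]_\ep)_\infty$ into the diagonal contribution $i=j$ and the off-diagonal contribution $i\ne j$.

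For the diagonal part, the self-energy of the uniform measure on the circle $\{|z-\beta_i|=\ep\}$ is exactly the Robin constant of the disc of radius $\ep$, giving $-\log\ep$ for each of the $d$ terms, so a total contribution of $(-\log\ep)/d$. For the off-diagonal part, I would partition the pairs according to whether $|\beta_i-\beta_j|>2\ep$ or $|\beta_i-\beta_j|\le 2\ep$. In the first case, the two circles are disjoint and the potential of $\delta_{\beta_j,\ep}$ is harmonic on the support of $\delta_{\beta_i,\ep}$, so the pairing equals $-\log|\beta_i-\beta_j|$, exactly the naive contribution from the discrete measure $[\beta]$. In the second case, every pair $(x,y)$ in the supports satisfies $|x-y|\le 4\ep$, so the integrand is bounded below by $-\log(4\ep)$; this replaces the (possibly very negative) term $-\log|\beta_i-\beta_j|$ by the safer lower bound $-\log(4\ep)$.

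Assembling, one has the inequality
\[
([\beta]_\ep,[\beta]_\ep)_\infty \;\ge\; \frac{-\log\ep}{d} + \frac{1}{d^2}\sum_{\substack{i\ne j\\ |\beta_i-\beta_j|>2\ep}} -\log|\beta_i-\beta_j| + \frac{1}{d^2}\sum_{\substack{i\ne j\\ |\beta_i-\beta_j|\le 2\ep}} -\log(4\ep),
\]
with $\ep=1/d^2=1/1023^2$. What remains is a purely numerical verification: compute the $1023$ roots of $G(c)$ to sufficient precision, sort their pairwise distances against the threshold $2\ep$, and evaluate the right-hand side to confirm that it is at least $0.00677490\ldots$.

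The only real obstacle is the precision of the numerical computation, since the sum involves roughly $10^6$ pairwise logarithms of distances between roots of a degree-$1023$ polynomial, and the right-hand side is small (on the order of $10^{-3}$). In practice this means computing the roots of $G(c)$ to enough decimal digits that the error in the truncated sum is well below $10^{-5}$, and guarding against the cancellation in the off-diagonal terms where conjugates cluster near the boundary of $M_0$. Once the roots are available to adequate precision, the bound follows by direct evaluation, exactly parallelling the estimate for $\alpha$.
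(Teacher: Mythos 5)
Your proposal matches the paper's proof, which for this lemma simply says the estimates of the preceding lemma (diagonal terms giving $-\log\ep$, disjoint discs giving $-\log\abs{\beta_i-\beta_j}$ exactly, overlapping discs bounded below by $-\log(4\ep)$) are repeated for the conjugates of $\beta$, followed by numerical evaluation. The approach and the final numerical step are essentially identical.
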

We now must estimate the middle term.
\begin{lemma}
 For $\al,\beta$ as chosen above, we have $-2([\al]_\ep,[\beta]_\ep)_\infty\geq 0.630005\ldots$ 
\end{lemma}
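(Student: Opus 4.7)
The approach mirrors the previous two lemmas. The idea is to decompose
\[
-([\alpha]_\epsilon,[\beta]_\epsilon)_\infty = \frac{1}{d^2}\sum_{i=1}^{d}\sum_{j=1}^{d}\int -\log|x-y|\,d\delta_{\alpha_i,\epsilon}(x)\,d\delta_{\beta_j,\epsilon}(y),
\]
where $\alpha_1,\ldots,\alpha_d$ and $\beta_1,\ldots,\beta_d$ are the Galois conjugates, and to give a term-by-term lower bound on each pair $(i,j)$.

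The first case is the generic one: when the discs $D(\alpha_i,\epsilon)$ and $D(\beta_j,\epsilon)$ are disjoint, that is $|\alpha_i-\beta_j|>2\epsilon$, the inner integral $x\mapsto \int -\log|x-y|\,d\delta_{\beta_j,\epsilon}(y)$ is the potential of the uniform measure on a circle; it is harmonic on the complement of $\overline{D(\beta_j,\epsilon)}$ and equal to $-\log|x-\beta_j|$ there. Averaging this harmonic function against $\delta_{\alpha_i,\epsilon}$ therefore yields exactly $-\log|\alpha_i-\beta_j|$. The second case handles near-collisions: if $|\alpha_i-\beta_j|\leq 2\epsilon$, then for every $x\in D(\alpha_i,\epsilon)$ and $y\in D(\beta_j,\epsilon)$ we have $|x-y|\leq|\alpha_i-\beta_j|+2\epsilon\leq 4\epsilon$, so $-\log|x-y|\geq -\log(4\epsilon)$. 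Combining the two cases gives the clean lower bound
\[
-([\alpha]_\epsilon,[\beta]_\epsilon)_\infty \geq \frac{1}{d^2}\Bigl(\sum_{|\alpha_i-\beta_j|>2\epsilon} -\log|\alpha_i-\beta_j| + \sum_{|\alpha_i-\beta_j|\leq 2\epsilon} -\log(4\epsilon)\Bigr),
\]
entirely analogous to the expressions derived in the preceding two lemmas.

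The remainder is a direct numerical calculation: compute the roots of $F(c)$ and $G(c)$ to sufficient precision, sort them by pairwise distance to identify the (expected very small) set of pairs with $|\alpha_i-\beta_j|\leq 2\epsilon=2/1023^2$, evaluate both sums above, and double the result. The main obstacle is not conceptual but computational and rigor-related: one must root-find two polynomials of degree $1023$ to high enough precision, and then accumulate roughly $10^6$ logarithmic terms while keeping the cumulative floating-point error well below the target gap between the computed value and the asserted $0.630005\ldots$ bound. Interval arithmetic (or a conservative a priori error estimate for each log and each summation step) suffices to turn the numerical computation into a rigorous inequality.
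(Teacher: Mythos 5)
Your decomposition and your treatment of the generic (disjoint-disc) pairs match the paper's, but there are two intertwined sign problems, one of which is a genuine gap. First, with the paper's convention $(\mu,\nu)=\iint-\log|x-y|\,d\mu\,d\nu$, your opening display computes $+([\alpha]_\epsilon,[\beta]_\epsilon)$ rather than $-([\alpha]_\epsilon,[\beta]_\epsilon)$; consequently the ``clean lower bound'' you derive is really a lower bound on $([\alpha]_\epsilon,[\beta]_\epsilon)$, i.e.\ an \emph{upper} bound on the quantity $-2([\alpha]_\epsilon,[\beta]_\epsilon)$ that the lemma asks you to bound from \emph{below}. Second --- and this is the part that cannot be repaired simply by flipping signs --- once you correctly aim at a lower bound for $\iint\log|x-y|\,d\delta_{\alpha_i,\epsilon}(x)\,d\delta_{\beta_j,\epsilon}(y)$ in the near-collision case $|\alpha_i-\beta_j|\le 2\epsilon$, your pointwise estimate $|x-y|\le 4\epsilon$ yields $\log|x-y|\le\log(4\epsilon)$, which is an upper bound in the wrong direction; no pointwise lower bound on $\log|x-y|$ is available here, since the two circles may intersect and $\log|x-y|$ is unbounded below there.

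The paper closes exactly this gap with a potential-theoretic input rather than a pointwise one: the potential $-U^{\delta_{\beta_j,\epsilon}}(x)=\int\log|x-y|\,d\delta_{\beta_j,\epsilon}(y)$ of the uniform measure on the circle of radius $\epsilon$ equals $\log|x-\beta_j|$ outside the closed disc and is constantly $\log\epsilon$ on it, hence satisfies $-U^{\delta_{\beta_j,\epsilon}}(x)\ge\log\epsilon$ for \emph{all} $x$; integrating against $\delta_{\alpha_i,\epsilon}$ gives the needed lower bound $\ge\log\epsilon$ for every close pair, whether or not the circles overlap. You should replace your $-\log(4\epsilon)$ estimate by this one and correct the sign in the opening display. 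Your remarks on the numerical side --- root-finding the two degree-$1023$ polynomials to sufficient precision, identifying the close pairs, and controlling the accumulated error with interval arithmetic --- are reasonable and describe where the remaining (computational) work genuinely lies.
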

\begin{proof}
 We begin by expanding our expression:
 \[
  -2([\al]_\ep,[\beta]_\ep)_\infty = \frac{2}{d^2} \sum_{1\leq i \neq j \leq d} \int \log\,\abs{x-y}\,d\delta_{\al_i,\ep}(x)\,d\delta_{\beta_j,\ep}(y).
 \]
 We again break up the terms in the sum based on the proximity of $\al_i$ and $\beta_j$. Suppose first that $\abs{\al_i-\beta_j}> 2\ep$. Then the discs $D(\al_i,\ep)$ and $D(\beta_j,\ep)$ are disjoint, so as in the previous lemma's proof, we have
 \[
  \int \log\,\abs{x-y}\,d\delta_{\al_i,\ep}(x)\,d\delta_{\beta_j,\ep}(y) = \log\,\abs{\al_i-\beta_j}.
 \]
 Now, the potential function of the disc $D(\beta_j,\ep)$ satisfies:
 \[
  -U^{\delta_{\beta_j,\ep}}(x) = \int \log\,\abs{x-y}\,d\delta_{\beta_j,\ep}(y) \geq \log\,\ON{cap}(D(\beta_j,\ep)) = \log \ep,
 \]
 so for the terms with $\abs{\al_i-\beta_j}\leq 2\ep$, we use instead the estimate that
 \[
  \int \log\,\abs{x-y}\,d\delta_{\al_i,\ep}(x)\,d\delta_{\beta_j,\ep}(y) = \int -U^{\delta_{\beta_j,\ep}}(x)\,d\delta_{\al_i,\ep}(x)\geq \log \ep.
 \]
 With these two estimates, we obtain the desired bound.
\end{proof}
We are now ready to prove Proposition \ref{prop:main-lb}.
\begin{proof}[Proof of Proposition \ref{prop:main-lb}]
 As $\alpha,\beta$ are algebraic integers, all conjugates of $\al$ are contained in $M_0$, and all conjugates of $\beta$ are contained in $M_1$, by the same argument as we used above to derive equation \eqref{eqn:mu0-c-lb}, we have
 \[
  d_\infty(\mu_0,[\al]_\ep) \leq \left( ([\al],[\al]) + 2\sqrt\ep + \frac{\log 1/\ep}{d}\right)^{1/2}
 \]
 and 
 \[
 d_\infty(\mu_1,[\beta]_\ep) \leq \left( ([\beta],[\beta]) + 2\sqrt\ep + \frac{\log 1/\ep}{d}\right)^{1/2}.
 \]
 where 
 \[
  ([\al],[\al]) = \frac{1}{d^2} \sum_{1\leq i \neq \leq d } -\logabs{\al_i - \al_j} = -0.00839974\ldots 
 \]
 and likewise $([\beta],[\beta]) =  -0.00677444\ldots $. Combining this estimate with the lower bound for $d_\infty([\al]_\ep,[\beta]_\ep)$ obtained by using the above lemmas in \eqref{eqn:Ae-Be-lb} in the equation \eqref{eqn:mu0-mu1-lb}, we obtain the desired result.
\end{proof}

\subsection{Proof of Theorem \ref{thm:degree-bound}} % and Corollary \ref{cor:S-0-1}}
We are now ready to prove Theorem \ref{thm:degree-bound}.
\begin{proof}[Proof of Theorem \ref{thm:degree-bound}]
 Suppose $c$ is an algebraic integer in $S_{0,1}$ of degree $d=[\bQ(c):\bQ]$. We combine Propositions \ref{prop:main-ub} and \ref{prop:main-lb} to obtain
\begin{equation}
 0.566325\ldots\leq d_\infty(\mu_0,\mu_1) \leq 2 \left(-\frac{1}{d^2} \log \frac{d^d}{d!} \lfrac{\pi}{4}^{d/2}  + 2\sqrt{\ep} + \frac{\log 1/\ep}{d}\right)^{1/2}.
\end{equation}
Taking $\ep=1/d^2$, we immediately see that we must have $d \leq 108$ or else the preceding inequality is violated.
\end{proof}

\subsection{Effective bounds for different choices of initial values}
Although the problem of determining the set $S_{0,1}$ is of particular interest given the known overlap of the Mandelbrot sets $M_0$ and $M_1$, the same techniques used above can be used to give bounds on the degree of elements in the sets $S_{a,b}$ for $a,b$ other rational integers. In each case here, we choose

\bibliographystyle{abbrv}
\bibliography{bib}

\end{document}